\newif\ifarxiv
\newcommand{%
    \def\svgwidth{1\columnwidth}
    \import{./figures/}{.pdf_tex}
}[2][1]{%
    \def\svgwidth{#1\columnwidth}
    \import{./figures/}{#2.pdf_tex}
}
\theoremstyle{plain}
\newtheorem{theorem}{Theorem}[section]
\newtheorem*{theorem*}{Theorem}
\newtheorem{prop}[theorem]{Proposition}
\newtheorem{corr}[theorem]{Corollary}
\newtheorem*{corr*}{Corollary}
\newtheorem*{problem*}{Problem}
\newtheorem{lemma}[theorem]{Lemma}
\theoremstyle{definition}
\newtheorem{defn}[theorem]{Definition}
\newtheorem{example}[theorem]{Example}
\newtheorem*{question*}{Question}
\newtheorem*{conj*}{Conjecture}
\newtheorem{remark}[theorem]{Remark}
\newtheorem*{obs*}{Observation}
\DeclareMathOperator{\codim}{codim}
\DeclareMathOperator{\rk}{rk}
\DeclareMathOperator{\cork}{cork}
\DeclareMathOperator{\im}{Im}
\DeclareMathOperator{\supp}{supp}
\DeclareMathOperator{\Op}{\mathcal{O}p}
\DeclareMathOperator{\Sol}{Sol}
\DeclareMathOperator{\Span}{Span}
\begin{document}
\allowdisplaybreaks
\title[$h$-Principle for Transverse Maps]{The $h$-Principle for Maps Transverse to Bracket-Generating Distributions}

\author[A. Bhowmick]{Aritra Bhowmick}
\address{Department of Mathematics and Statistics, IISER Kolkata \\ West Bengal, India}
\email{aritrabh.pdf@iiserkol.ac.in}

\subjclass[2020]{Primary: 58A30, 58A20. Secondary: 58A17, 57R42.}

\keywords{transverse maps, bracket-generating distributions, h-principle.}

\begin{abstract}
Given a smooth bracket-generating distribution $\mathcal{D}$ of constant growth on a manifold $M$, we prove that maps from an arbitrary manifold $\Sigma$ to $M$, which are transverse to $\mathcal{D}$, satisfy the complete $h$-principle. This partially settles a question posed by M. Gromov.
\end{abstract}

\date{\today}
\maketitle

\setcounter{tocdepth}{3}

\frenchspacing

\section{Introduction}
A \emph{distribution} $\mathcal{D}$ on a manifold $M$ is a (smooth) subbundle of the tangent bundle $TM$. Given such a $\mathcal{D}$, we can consider the sheaf $\Gamma \mathcal{D}$ of local sections to $\mathcal{D}$, i.e, local vector fields on $M$ taking values in $\mathcal{D}$. The distribution $\mathcal{D}$ is called \emph{bracket-generating} if at each point $x \in M$, the tangent space $T_x M$ is spanned by the vector fields obtained by taking finitely many successive Lie brackets of vector fields in $\Gamma \mathcal{D}$. We say $\mathcal{D}$ is \emph{$(r-1)$-step bracket-generating}\footnote{Note that what we call a $(r-1)$-step bracket-generating is usually called an $r$-step bracket-generating distribution elsewhere in the literature.} at $x \in M$, if there exists some integer $r = r(x)$ such that \[T_x M = \Span \Big\{[X_1, \ldots [X_{k-1}, X_k]\cdots]_x \;\Big|\; X_1,\ldots,X_k \in \Gamma \mathcal{D}, \; 1 \le k \le r\Big\}.\]
Bracket-generating distributions are the stepping stone for the field of sub-Riemannian geometry \cite{gromovCCMetric, montTour}.

One possible way to study a given distribution $\mathcal{D}$ is via smooth maps $u: \Sigma \to M$ from an arbitrary manifold $\Sigma$ and looking at how the image of the differential $du: T\Sigma \to TM$ intersects $\mathcal{D}$. The map $u$ is said to be \emph{transverse} to $\mathcal{D}$ if $du(T_\sigma \Sigma) + \mathcal{D}_{u(\sigma)} = T_{u(\sigma)}M$ holds for every $\sigma \in \Sigma$. In \cite{gromovBook}, Gromov asked the reader to prove the following.
\begin{theorem*}
	Given a bracket-generating distribution $\mathcal{D}$ on a manifold $M$, maps $\Sigma \to M$ transverse to $\mathcal{D}$ satisfy the $h$-principle.
\end{theorem*}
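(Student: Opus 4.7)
The plan is to phrase transversality as the open, $\mathrm{Diff}(\Sigma)$-invariant partial differential relation
\[
\mathcal{R} = \bigl\{(\sigma, y, L) \in J^1(\Sigma, M) : L(T_\sigma \Sigma) + \calD_y = T_y M\bigr\},
\]
and to establish the complete $h$-principle for $\mathcal{R}$ by adapting Gromov's convex integration to exploit the bracket-generating hypothesis. Openness of $\mathcal{R}$ is automatic, so by a standard reduction (covering $\Sigma$ by charts, inducting on a handle decomposition, and using an Eliashberg--Mishachev style holonomic approximation over the codimension-one skeleton), the problem cuts down to a local statement on a small chart $U \subset M$ on which $\calD$ is framed by horizontal vector fields $X_1,\ldots, X_m$ and iterated brackets of these frame $TM$ (a Carnot-type chart).

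The substantive input is the bracket-generating filtration $\calD \subset \calD^2 \subset \cdots \subset \calD^r = TM$, where $\calD^{k+1} = \calD^k + [\Gamma \calD, \Gamma \calD^k]$. Given a formal solution $(u_0, F)$ with $F(T\Sigma) + \calD = TM$, I would produce a transverse map $u$ that is $C^0$-close to $u_0$ by a sequence of horizontal corrugations of $u_0$, one per depth. At depth $k$, the defect of $du$ relative to $F$ modulo $\calD^k$ is absorbed by inserting small rectangular loops of the form $\epsilon X_i, \epsilon X_j, -\epsilon X_i, -\epsilon X_j$ (distributed across a fine triangulation of $\Sigma$), whose second-order commutator displacement realizes a prescribed direction in $\calD^{k+1} \setminus \calD^k$ at spatial scale $O(\epsilon^2)$ while introducing only an $O(\epsilon)$ $C^0$-deviation. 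Iterating through $k = 1,\ldots, r-1$ with rapidly shrinking scales converges to a transverse holonomic map.

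The main obstacle is quantitative control across depths: corrugations at depth $k+1$ also perturb $du$ modulo $\calD^k$, so a careful scale hierarchy must be chosen so that each round only needs to correct a bounded, ever-shrinking error left by the previous rounds. Moreover, the \emph{complete} $h$-principle announced in the abstract is parametric, relative, and $C^0$-dense, so the entire construction has to be carried through family-wise; I expect this to be handled by establishing microflexibility of the sheaf of transverse maps and then invoking Gromov's general machinery once the local bracket-depth induction is in hand. Verifying this microflexibility, together with the estimates that keep the iterated corrugations under control, will be the technical heart of the argument.
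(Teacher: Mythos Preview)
Your proposal diverges substantially from the paper's approach, and as written it contains a real gap.

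\textbf{The gap.} Your corrugation scheme inserts high-frequency horizontal loops into $u_0$. But the velocity along any such loop lies in $\calD$ at every instant, so the derivative $du$ is modified only in the $\calD$-direction; the quotient $\lambda\circ du : T\Sigma \to TM/\calD$ is unchanged. The commutator displacement $\epsilon^2[X_i,X_j]$ you invoke is a \emph{position} effect (this is Chow's theorem), not a velocity effect: averaged over one period of length $O(\epsilon)$ it contributes only $O(\epsilon)$ to the macroscopic derivative in the $\calD^2/\calD$ direction, and $O(\epsilon^{k})$ for depth-$k$ brackets. To realize a fixed transverse component of $F$ you would need $\epsilon$ of order one, destroying $C^0$-closeness. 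Put differently, the relation $\calR$ is open but not ample along principal directions, so convex integration in its standard form does not apply; this is exactly the error in Gromov's original hint that the paper's introduction alludes to. Separately, your final remark about ``establishing microflexibility of the sheaf of transverse maps'' is vacuous: $\calR$ is open, so $\Sol\calR$ is automatically microflexible; the difficulty is elsewhere.

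\textbf{What the paper actually does.} The paper does not attack transversality directly. Instead it passes to $\tilde\Sigma=\Sigma\times\bbR$ and studies maps $\tilde\Sigma\to M$ that are transverse to $\calD$ \emph{and} whose restriction to each slice $\{\sigma\}\times\bbR$ is a $\calD$-horizontal immersed curve satisfying a higher-jet regularity condition (``$\calW$-regularity''). The key analytic step (\autoref{prop:inversion}) is that the horizontality operator is infinitesimally invertible over $\calW$-regular curves, yielding microflexibility via Nash--Moser. The bracket-generating hypothesis enters not through commutator loops but through a jet-space codimension count (\autoref{thm:jetLifting}): it guarantees that $\calW$-regular jets are generic among horizontal jets, so the local $h$-principle holds for such curves. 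One then invokes Gromov's sheaf machinery (invariance under fiber-preserving diffeomorphisms of $\tilde\Sigma$ gives flexibility on $\Sigma\times 0$) together with the obvious formal microextension $\calR_{\mathrm{tran}}\to\tilde\calR_{\mathrm{tang\text{-}tran}}$ to conclude. The auxiliary $\bbR$-factor, carrying a horizontal curve, is what produces the transverse derivative---not a corrugation of the original map.
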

The idea of proof as indicated in \cite[pg. 84]{gromovBook} contained an error that Gromov later acknowledged in \cite[pg. 254]{gromovCCMetric}. In \cite[pg. 136]{eliashbergBook}, the authors showed that the proof indeed goes through if the distribution $\mathcal{D}$ is contact. In fact, their argument remains valid for any strongly bracket-generating distribution (or fat distribution, see \cite{montTour} for a definition). Moreover, they also planned out a strategy that could work for an arbitrary bracket-generating distribution as well. In \cite{pinoMicroflexibleTransverse}, the authors carried out the ideas of \cite{eliashbergBook} and proved the $h$-principle for smooth maps transverse to \emph{real analytic} bracket-generating distributions on a real analytic manifold. Their argument heavily depends on estimating the codimension of certain semi-analytic sets in the jet bundle. It was also conjectured in the same article that Gromov's original statement should hold for a \emph{smooth} bracket-generating distribution if certain higher order jet calculations are performed.

The main goal of this article is to identify a suitable higher jet `regularity' condition (\autoref{defn:weaklyRegular}) so that the sheaf of $\mathcal{D}$-horizontal maps $\mathbb{R} \to M$ satisfying this regularity is microflexible. The difficulty lies in proving the local $h$-principle for this class of maps provided the distribution is equiregularly bracket-generating (\autoref{defn:equiregular}), which is proved in \autoref{thm:jetLifting}. Then, applying Gromov's analytic and sheaf-theoretic techniques, the $h$-principles for transverse maps (\autoref{thm:hPrinTrans}) and for transverse immersions (\autoref{thm:hPrinTransImm}), follow by a standard argument. We refer to \cite{gromovBook, eliashbergBook} for the details of this theory.

The article is organized as follows: In \autoref{sec:bracketGenDist}, we recall some basic notions about bracket-generating distributions. Then in \autoref{sec:weakRegularity}, we obtain the regularity criterion for maps $\mathbb{R} \to M$ horizontal to a bracket-generating distribution and prove the local $h$-principle for such maps. Lastly, in \autoref{sec:hPrinTrans} we prove the main $h$-principles.

\section{Bracket-Generating Distributions}\label{sec:bracketGenDist}
\begin{defn}
	A \emph{distribution} of rank $n$ (and corank $p$) on a manifold $M$ is a smooth vector subbundle of rank $n$ (and corank $p$) of the tangent bundle $TM$.
\end{defn}

Given any distribution $\mathcal{D} \subset TM$, we have the sheaf of local sections $\Gamma \mathcal{D}$, which is a sheaf of local vector fields on $M$. By the notation $X \in \mathcal{D}$ we shall mean a local section $X \in \Gamma \mathcal{D}$ defined on some unspecified open set of $M$. Given two arbitrary sheaves $\mathcal{E},\mathcal{F}$ of vector fields on $M$ (not necessarily given as sheaves of sections of some distribution), we can define the sheaf 
\[ [ \mathcal{E}, \mathcal{F} ] = \Span \Big\{ [X,Y] \; \Big| \; \text{$X \in \mathcal{E}, Y \in \mathcal{F}$} \Big\},\]
where the span is taken over $C^\infty(M)$. Inductively, we then define \[\mathcal{D}^0 = 0, \qquad \mathcal{D}^1 = \mathcal{D}, \qquad \mathcal{D}^{i+1} = \mathcal{D}^i + [\mathcal{D},\mathcal{D}^i],\; i \ge 1.\]

\begin{defn}
	A distribution $\mathcal{D}$ is said to be \emph{bracket-generating} if for each $x \in M$ we have $T_x M = \mathcal{D}^{r+1}_x$ for some $r = r(x)$. $\mathcal{D}$ is said to have type $\mathfrak{m} = \mathfrak{m}(x)$ at $x$ for the $(r+2)$-tuple $\mathfrak{m} = \big(0=m_0 \le \cdots \le m_{r+1} = \dim M\big)$, where $m_i = \rk \mathcal{D}^i_x$ for $0 \le i \le r+1$.
\end{defn}

For a \emph{generic} bracket-generating distribution $\mathcal{D}$ on $M$, the number of steps it takes to bracket-generate $T_x M$ is non-constant and the sheaves $\mathcal{D}^i$ may fail to be of constant rank.

\begin{defn}\label{defn:equiregular}
	A bracket-generating distribution $\mathcal{D}$ on $M$ is said to be $r$-step bracket-generating if $T_x M = \mathcal{D}^{r+1}_x$ for all $x \in M$. Furthermore, $\mathcal{D}$ is said to be \emph{equiregular} (or, \emph{of constant growth}) of type $\mathfrak{m} = (m_0 < \cdots < m_{r+1})$ if $\mathcal{D}$ has the type $\mathfrak{m}$ at every $x\in M$.
\end{defn}

Throughout this article, we shall mostly restrict ourselves to equiregular distributions $\mathcal{D}$ of some fixed type $\mathfrak{m}$. In particular, each $\mathcal{D}^s$ will be a distribution, and we get a flag 
\[0=\mathcal{D}^0 \subset \mathcal{D}=\mathcal{D}^1 \subset \mathcal{D}^2 \subset \dots \subset \mathcal{D}^{r+1} = TM.\]
It should be noted that in general, equiregularity is a non-generic condition on the germs of distributions of a given rank, although most of the interesting distributions appearing in the literature possess this property.

\begin{example}
	Contact and Engel distributions are well-studied examples of bracket-generating distributions that bracket-generates the tangent bundle in $1$ and $2$ steps respectively. More generally, we have Goursat structures which are certain rank $2$, $r$-step bracket-generating distributions on manifolds of dimension $r + 2$. Note that all of these distributions are equiregular as well. On the other hand, the Martinet distribution, given as the kernel $\ker\left( dz - y^2 dx \right)$ on $\mathbb{R}^3$ is \emph{not} equiregular. We refer to \cite{montTour} for many more examples.
\end{example}

We shall need the following lemma in the next section.

\begin{lemma}\label{lemma:specialVectorFields}
	Let $\mathcal{D}$ be an equiregular bracket-generating distribution on $M$, of type $\mathfrak{m}=(m_0 < \cdots < m_{r+1})$. Set $p_s = \rk \big(\mathcal{D}^{s+1}/\mathcal{D}^{s}) = m_{s+1} - m_s > 0$. Then, for any $x \in M$ and for $1\le j \le p_s, 1\le s\le r$, there exists a collection of vectors \[\tau^{s,j} \in \mathcal{D}_x, \quad \eta^{s,j} \in \mathcal{D}^{s}_x \setminus \mathcal{D}^{s-1}_x, \quad \zeta^{s,j} \in \mathcal{D}^{s+1}_x \setminus \mathcal{D}^s_x,\]
	and $1$-forms $\lambda^{s,j}$ defined near $x$, such that the following holds.
	\begin{itemize}
		\item For each $1 \le s \le r$, $\mathcal{D}_x^{s+1} = \mathcal{D}^s_x + \Span \langle \zeta^{s,1}_x,\ldots,\zeta^{s,p_s}_x\rangle$.
		
		\item The $1$-forms $\{\lambda^{s,j}\}$ are dual to $\{\zeta^{s,j}\}$ at $x$. Furthermore, $\{\lambda^{s,j} \; | \; 1\le j \le p_s, 1\le s \le r\}$ is a frame for the annihilator bundle of $\mathcal{D}$ near $x$.
		
		\item For each $1 \le s \le s^\prime \le r$ and $1 \le j \le p_s, 1\le j^\prime \le p_{s^\prime}$, we have
		\[d\lambda^{s^\prime, j^\prime}|_x (\tau^{s,j}, \eta^{s,j}) = \begin{cases}
			\delta_{j,j^\prime}, \quad s^\prime = s \\
			0, \quad s^\prime > s,
		\end{cases}\]
		where $\delta_{j,j^\prime}$ is the Kronecker's delta function.
	\end{itemize}
\end{lemma}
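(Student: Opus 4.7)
The plan is to build the four collections interlockedly: produce each $\zeta^{s,j}$ as the value at $x$ of an iterated Lie bracket, read $\tau^{s,j}$ and $\eta^{s,j}$ off the two factors of this bracket, then extract $\lambda^{s,j}$ as a dual coframe, and verify the exterior-derivative identities via Cartan's formula.

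First I would use the recursion $\calD^{s+1} = \calD^s + [\calD, \calD^s]$, together with the fact that $\calD^s$ is generated as a sheaf by iterated Lie brackets of length at most $s$ of sections of $\Gamma \calD$. This allows me to pick, for each $s \in \{1, \ldots, r\}$ and $j \in \{1, \ldots, p_s\}$, a section $T^{s,j} \in \Gamma \calD$ and an iterated Lie bracket $E^{s,j}$ of length $s$ of sections of $\Gamma \calD$, so that $\{[T^{s,j}, E^{s,j}]|_x\}_j$ projects to a basis of $\calD^{s+1}_x / \calD^s_x$ and so that $E^{s,j}|_x$ has non-zero image in $\calD^s_x / \calD^{s-1}_x$. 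I then set $\tau^{s,j} := T^{s,j}|_x$, $\eta^{s,j} := E^{s,j}|_x$, and $\zeta^{s,j} := \pm [T^{s,j}, E^{s,j}]|_x$, with the sign chosen to match the convention below. The coframe is defined by first declaring $\lambda^{s,j}|_x \in \calD_x^\perp$ to be the functional annihilating $\calD_x$ and dual to the $\zeta$-family at $x$, then extending to a smooth section of the smooth subbundle $\calD^\perp \subset T^*M$ near $x$.

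The verification uses Cartan's formula
\[
d\lambda^{s',j'}|_x(\tau^{s,j}, \eta^{s,j}) = \tau^{s,j}\bigl(\lambda^{s',j'}(E^{s,j})\bigr) - \eta^{s,j}\bigl(\lambda^{s',j'}(T^{s,j})\bigr) - \lambda^{s',j'}\bigl([T^{s,j}, E^{s,j}]\bigr)\big|_x.
\]
The middle term vanishes since $\lambda^{s',j'}(T^{s,j}) \equiv 0$. The final term evaluates at $x$ to $\pm \lambda^{s',j'}(\zeta^{s,j})$, which equals $\pm \delta_{j,j'}$ when $s' = s$ by duality at $x$, and which vanishes when $s' > s$ because $\lambda^{s',j'}|_x$ annihilates $\calD^{s'}_x \supseteq \calD^{s+1}_x \ni \zeta^{s,j}$. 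For $s' \ge s$, the function $\lambda^{s',j'}(E^{s,j})$ vanishes at $x$ since $E^{s,j}|_x \in \calD^s_x \subseteq \calD^{s'}_x$, so the first term is a genuine first-order derivative which can be tuned to its required value using the gauge freedom in the smooth extension of $\lambda^{s',j'}$: such an extension is only determined up to addition of sections of $\calD^\perp$ vanishing at $x$.

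The main obstacle lies in the coherent execution of the first step. Simultaneously prescribing the bracket $[T^{s,j}, E^{s,j}]|_x$ modulo $\calD^s_x$, the class of $E^{s,j}|_x$ in $\calD^s_x / \calD^{s-1}_x$, and the gauge of $\lambda^{s,j}$, requires an inductive procedure organized around the graded nilpotent Lie algebra $\bigoplus_s \calD^s_x / \calD^{s-1}_x$ at $x$, whose graded bracket $\calD_x \otimes (\calD^s_x / \calD^{s-1}_x) \to \calD^{s+1}_x/\calD^s_x$, induced by Lie brackets of germs, must surject in each bracket-generating regime where the statement is non-vacuous.
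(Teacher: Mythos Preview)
Your overall architecture matches the paper's: build $\zeta^{s,j}$ as (values of) brackets $-[T^{s,j},E^{s,j}]$, read off $\tau^{s,j},\eta^{s,j}$, pick the $\lambda^{s,j}$ dual to the $\zeta$'s inside $\Ann\calD$, and verify the last bullet via Cartan's formula. The second Cartan term vanishes for the reason you give, and the third term evaluates exactly as you say. The place where your argument is incomplete is the first term.

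You observe correctly that $\lambda^{s',j'}(E^{s,j})$ vanishes \emph{at} $x$ for $s'\ge s$, but then assert that the derivative $\tau^{s,j}\bigl(\lambda^{s',j'}(E^{s,j})\bigr)\big|_x$ can be set to zero by gauging the extension of $\lambda^{s',j'}$. This is where the argument stalls. Adding a section $\mu^{s',j'}\in\Gamma(\calD^\perp)$ with $\mu^{s',j'}(x)=0$ changes that derivative by $(\nabla_{\tau^{s,j}}\mu^{s',j'})(\eta^{s,j})$, and since $\nabla\mu^{s',j'}|_x$ lands in $\calD^\perp_x$, this pairing only sees $\eta^{s,j}\bmod\calD_x$. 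For a \emph{fixed} $(s',j')$ you must now solve a linear system indexed by \emph{all} pairs $(s,j)$ with $2\le s\le s'$, and the associated linear functionals on $\hom(T_xM,\calD^\perp_x)$ are the rank-one tensors $\tau^{s,j}\otimes(\eta^{s,j}\bmod\calD_x)$. Nothing in your construction prevents these from being linearly dependent, in which case the system can be inconsistent. Your final paragraph seems to sense this, but does not resolve it.

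The paper sidesteps the issue entirely by moving the freedom from $\lambda$ to the extension of $\eta$. It chooses the forms $\lambda^{s,j}$ so that the constant-rank distributions $\tilde\calD^s:=\bigcap_{s'\ge s}\bigcap_{j'}\ker\lambda^{s',j'}$ satisfy $\tilde\calD^s\subset\calD^s$ as sub-sheaves near $x$ (achieved, for instance, by first extending each $\zeta^{s,j}$ to a local section of $\calD^{s+1}$ and taking the $\lambda$'s to be the dual coframe). One can then extend $\eta^{s,j}\in\calD^s_x=\tilde\calD^s_x$ to a section $\tilde\eta^{s,j}$ of the genuine subbundle $\tilde\calD^s$. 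For $s'\ge s$ the function $\lambda^{s',j'}(\tilde\eta^{s,j})$ now vanishes \emph{identically}, so the first Cartan term is exactly zero with no tuning required, and only the bracket term survives.
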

\begin{proof}
	Since $\mathcal{D}^{s+1} = \mathcal{D}^s + [\mathcal{D}, \mathcal{D}^s]$, we have the well-defined sheaf homomorphism
	\begin{align*}
		\Omega^s : \mathcal{D} \otimes \mathcal{D}^s/ \mathcal{D}^{s-1} &\longrightarrow \mathcal{D}^{s+1} / \mathcal{D}^s \\
		X \otimes (Y \mod \mathcal{D}^{s-1}) &\longmapsto -[X,Y] \mod \mathcal{D}^s
	\end{align*}
	Furthermore, $\Omega^s$ is $C^\infty(M)$-linear and hence, for vectors $X \in \mathcal{D}_x, Y \in \mathcal{D}^s_x$ we have the linear maps
	\[\Omega^s_x(X, Y \mod \mathcal{D}^{s-1}_x) = -[\tilde{X}, \tilde{Y}]_x \mod \mathcal{D}^s_x,\]
	where $\tilde{X} \in \mathcal{D}, \tilde{Y} \in \mathcal{D}^s$ are arbitrary extensions of $X,Y$ respectively. Thus, $\Omega^s : \mathcal{D} \otimes \mathcal{D}^s/\mathcal{D}^{s-1} \rightarrow \mathcal{D}^{s+1}/\mathcal{D}^s$ is a bundle map, which is surjective since $\mathcal{D}$ is bracket-generating, for $1\le s\le r$.
	
	Choose vectors $\tau^{s,j} \in \mathcal{D}_x, \eta^{s,j} \in \mathcal{D}^{s}_x \setminus \mathcal{D}^{s-1}_x$ so that $\{ \Omega^s_x(\tau^{s,j}, \eta^{s,j}) \; | \; 1\le j \le p_s \}$ forms a frame of $\mathcal{D}^{s+1}_x/\mathcal{D}^s_x$. Let us consider some arbitrary extensions $\tilde{\tau}^{s,j} \in \mathcal{D}, \tilde{\eta}^{s,j} \in \mathcal{D}^s \setminus \mathcal{D}^{s-1}$ of $\tau^{s,j}, \eta^{s,j}$ respectively, and denote $\tilde{\zeta}^{s,j} = -[\tilde{\tau}^{s,j}, \tilde{\eta}^{s,j}] \in \mathcal{D}^{s+1}$. Note that $\Omega^s(\tilde{\tau}^{s,j}, \tilde{\eta}^{s,j}) = \tilde{\zeta}^{s,j} \mod \mathcal{D}^s$. Since $TM = \oplus_{s=0}^r \mathcal{D}^{s+1}/\mathcal{D}^s$, we have a local framing $TM \underset{\textrm{loc.}}{=}  \mathcal{D} \oplus \Span \langle \tilde{\zeta}^{s,j}, 1\le j \le p_s, 1\le s \le r \rangle$ near $x$. Next, choose independent local $1$-forms $\lambda^{s,j}$ near $x$, which are in the annihilator bundle $\textrm{Ann} \mathcal{D}$ (i.e., $\lambda^{s,j}|_{\mathcal{D}} = 0$), and $\left\{ \lambda^{s,j} \right\}$ is dual to $\left\{ \tilde{\zeta}^{s,j} \right\}$. Note that 
	\[\mathcal{D}^s \underset{\textrm{loc.}}{=} \bigcap_{s^\prime \ge s} \bigcap_{j=1}^{p^{s^\prime}} \ker \lambda^{s^\prime,j}, \quad 1 \le s \le r.\]
	Hence, for $s^\prime \ge s$, we have
	\begin{align*}
		d\lambda^{s^\prime, j^\prime}(\tau^{s,j}, \eta^{s,j}) &= \left[ \tilde{\tau}^{s,j} \big( \underbrace{\lambda^{s^\prime, j^\prime}\left( \tilde{\eta}^{s,j} \right)}_{0} \big) - \tilde{\eta}^{s,j} \big( \underbrace{\lambda^{s^\prime, j^\prime}\left( \tilde{\tau}^{s,j} \right)}_{0} \big) - \lambda^{s^\prime, j^\prime}\big( \underbrace{[\tilde{\tau}^{s,j}, \tilde{\eta}^{s,j}]}_{-\tilde{\zeta}^{s,j}} \big) \right]_x \\
		&= \lambda^{s^\prime, j^\prime}(\zeta^{s,j}) \\
		&= \begin{cases}
			\delta_{j^\prime,j}, \; s^\prime = s \\
			0, \; s^\prime > s.
		\end{cases}
	\end{align*}
	This concludes the proof.	
\end{proof}

\section{Regularity of Horizontal Curves} \label{sec:weakRegularity}
Let us fix an arbitrary distribution $\mathcal{D}$ with rank $n$ and corank $p$ on $M$ with $\dim M = N = n + p$. Given a manifold $\Sigma$, a map $u : \Sigma \to M$ is called \emph{$\mathcal{D}$-horizontal} if $du(T_\sigma \Sigma) \subset \mathcal{D}_{u(\sigma)}$ for each $\sigma \in \Sigma$. For simplicity, let us assume that $\mathcal{D}$ is given as the kernel of $1$-forms $\lambda^1, \ldots, \lambda^p$ on $M$. Then, $\mathcal{D}$-horizontal maps $\Sigma \to M$ are precisely the solutions of the following \emph{non-linear} differential operator:
\begin{equation}
	\label{eqn:opD}
	\begin{aligned}
		\mathfrak{D} : C^\infty(\Sigma, M) &\longrightarrow \Omega^1(\Sigma, \mathbb{R}^p) = \Gamma \hom(T\Sigma, \mathbb{R}^p) \\
		u &\longmapsto \big(u^*\lambda^1, \ldots, u^*\lambda^p\big)
	\end{aligned}
\end{equation}
To find solutions of $\mathfrak{D}$, we appeal to the Nash-Gromov implicit function theorem. As a first step, linearizing $\mathfrak{D}$ at some $u : \Sigma \to M$ we get the \emph{linear} differential operator:
\begin{equation}
	\label{eqn:opL_u}
	\begin{aligned}
		\mathfrak{L}_u : \Gamma u^*TM &\longrightarrow \Omega^1(\Sigma, \mathbb{R}^p)\\
		\xi &\longmapsto \Big[X \mapsto \Big(d\lambda^s(\xi, u_*X) + X\big(\lambda^s \circ \xi\big)\Big)_{s=1}^p\Big],
	\end{aligned}
\end{equation}
which restricts to the \emph{bundle map} on $\Gamma u^*\mathcal{D}$:
\[\mathcal{L}_u \coloneqq \mathfrak{L}_u|_{\Gamma u^*\mathcal{D}} : \xi \longmapsto \Big[X \mapsto \Big( d\lambda^s(\xi, u_*X)\Big) \Big].\]
An immersion $u : \Sigma \to M$ is called \emph{$(d\lambda^s)$-regular} if the bundle map $\mathcal{L}_u$ is surjective. In general, $(d\lambda^s)$-regularity depends on our choice of $1$-forms $\lambda^s$, whereas $(d\lambda^s)$-regularity of a $\mathcal{D}$-horizontal map is independent of any such choice. A $(d\lambda^s)$-regular horizontal immersion is also called \emph{$\Omega$-regular}, where $\Omega : \Lambda^2 \mathcal{D} \rightarrow  TM/\mathcal{D}$ is the associated curvature $2$-form. It follows that the sheaf of $\Omega$-regular horizontal maps $\Sigma \to M$ is microflexible \cite[pg. 339]{gromovBook}. Note that $(d\lambda^s)$-regularity is a first order condition on the class of maps. For the existence of a $(d\lambda^s)$-regular horizontal map, even when $\dim \Sigma = 1$, $\mathcal{D}$ must be $1$-step bracket-generating. We now identify a suitable higher order regularity for maps $\mathbb{R} \to M$ horizontal to an arbitrary distribution.

\subsection{$\mathcal{W}$-Regular Horizontal Curves}
The arguments presented in this section follow the general scheme of algebraically solving (underdetermined) linear partial differential operators as in \cite[pg. 155]{gromovBook}. Briefly, the idea is as follows. In order to solve the linear operator $\mathfrak{L}_u$ for some $u$, ideally we need to find out some linear operator $\mathfrak{M}_u : \Omega^1(\Sigma, \mathbb{R}^p) \rightarrow \Gamma u^*TM$ so that $\mathfrak{L}_u \circ \mathfrak{M}_u = \mathrm{Id}$ holds. This involves solving partial differential equations in the coefficients of $\mathfrak{M}_u$, which is in general hard to do. Instead, we try to look for a linear operator $\mathfrak{S}_u : \Gamma u^*TM \rightarrow \Omega^1(\Sigma, \mathbb{R}^p)$ satisfying $\mathfrak{S}_u \circ \overline{\mathfrak{L}_u} = \mathrm{Id}$, where $\overline{\mathfrak{L}_u} : \Omega^1(\Sigma, \mathbb{R}^p) \rightarrow \Gamma u^*TM$ is the  formal adjoint of $\mathfrak{L}_u$. Note that this system is \emph{algebraic} in the coefficients of $\mathfrak{S}_u$, and thus are considerably easier to solve. Once we get a smooth solution $\mathfrak{S}_u$, we can take the formal adjoint of the whole equation, and obtain $\mathfrak{L}_u \circ \overline{\mathfrak{S}_u} = \mathrm{Id}$, since $\overline{\overline{\mathfrak{L}_u}} = \mathfrak{L}_u$. Taking $\mathfrak{M}_u = \overline{\mathfrak{S}_u}$ we then have the desired solution to the original problem. This observation was used by Gromov to prove the fact that a generic \emph{underdetermined} linear operator admits (universal) a right inverse \cite[pg. 156]{gromovBook}. Although $\mathfrak{L}_u$ is underdetermined, we are not able to appeal to this theorem directly, since we do not know whether the operator is sufficiently generic in the sense of Gromov. Instead, we explicitly identify a class of maps $u$ for which the algebraic system always admits a (smooth) solution. We would like to note that a similar approach was also successfully used in \cite{leoNonFree}, where the author proved the existence of non-free isometric immersions.

Without loss of generality, we assume $M = \mathbb{R}^N$ and fix some coordinates $y^1,\ldots, y^N$ on $M$. Let us write the $1$-forms $\lambda^s$ as \[\lambda^s = \lambda^s_\mu dy^\mu, \quad 1\le s\le p.\] We also fix the (global) coordinate $t$ on $\mathbb{R}$. For a function $u : \mathbb{R} \to \mathbb{R}^N$, the linearization operator $\mathfrak{L}_u$ (\autoref{eqn:opL_u}) is then given as
\begin{equation}\label{eqn:L_uFormula}
	\begin{aligned}
		\mathfrak{L}_u : C^\infty(\mathbb{R}, \mathbb{R}^N) &\longrightarrow C^\infty(\mathbb{R}, \mathbb{R}^p)\\
		\xi = \big(\xi^\mu\big) &\longrightarrow \Big(\big(\partial_\mu \lambda^s_\nu \circ u\big) \partial_t u^\nu \xi^\mu + \big(\lambda^s_\mu \circ u\big) \partial_t \xi^\mu\Big)_{s=1}^p
	\end{aligned}
\end{equation}
Written in a matrix form we have $\mathfrak{L}_u(\xi) = \mathfrak{L}_u^0 \xi + \mathfrak{L}_u^1 \partial_t \xi$ where the $p \times N$ matrices $\mathfrak{L}_u^i$ are given as
\begin{equation}\label{eqn:L_uMatrixFormula}
	\mathfrak{L}_u^0 = \begin{pmatrix}
		\big(\partial_\mu \lambda^s_\nu \circ u\big)\partial_t u^\nu
	\end{pmatrix}_{p \times N}, \quad \mathfrak{L}_u^1 = \begin{pmatrix}
		\lambda^s_\mu \circ u
	\end{pmatrix}_{p \times N}.
\end{equation}
Taking the formal adjoint of $\mathfrak{L}_u$, we get the first order operator $\mathfrak{R}_u : C^\infty(\mathbb{R}, \mathbb{R}^p) \to C^\infty(\mathbb{R}, \mathbb{R}^N)$ as follows
\begin{equation}\label{eqn:R_uAdjointFormula}
	\mathfrak{R}_u = \mathfrak{R}_u^0 + \mathfrak{R}_u^1 \partial_t = \big(\mathfrak{L}_u^0 - \partial_t \mathfrak{L}_u^1\big)^\dagger - \big(\mathfrak{L}_u^1\big)^\dagger \partial_t.
\end{equation}
Observe that
\begin{equation}
	\left.\begin{aligned}
		\mathfrak{L}_u^0 - \partial_t \mathfrak{L}_u^1
		&= \Big(\big(\partial_\mu \lambda^s_\nu \circ u\big)\partial_t u^\nu - \partial_t \big(\lambda^s_\mu \circ u\big) \Big)_{p\times N} \\
		&= \Big( \big(\partial_\mu \lambda^s_\nu \circ u - \partial_\nu \lambda^s_\mu \circ u\big)\partial_t u^\nu \Big)_{p\times N} \\
		&= \Big( -(\iota_{u_*\partial_t}d\lambda^s)(\partial_\mu) \Big)_{p\times N}.
	\end{aligned}\right.
\end{equation}
Let us now consider the equation \begin{equation}
	\mathfrak{S} \circ \mathfrak{R}_u = \mathrm{Id}, \label{eqn:SR=Id}
\end{equation}
for an arbitrary order $q$ linear operator $\mathfrak{S} : C^\infty(\mathbb{R}, \mathbb{R}^N) \to C^\infty(\mathbb{R}, \mathbb{R}^p)$ given as \[\mathfrak{S} \coloneqq \mathfrak{S}^0 + \mathfrak{S}^1 \partial_t + \cdots + \mathfrak{S}^q \partial_t^q,\]
where $\mathfrak{S}^i$ are $p \times N$ matrices of functions. Note that \autoref{eqn:SR=Id} is \emph{algebraic} in the entries of the matrices $\mathfrak{S}^i$. In fact, \autoref{eqn:SR=Id} represents a total of $p^2(q+2)$ many equations in $pN(q+1)$ many variables, namely, $\big\{\mathfrak{S}^i_{\alpha\beta} \;\big|\; 1 \le \alpha \le p, \; 1 \le \beta \le N, \; 0 \le i \le q\big\}$. This system is \emph{underdetermined} if and only if 
\begin{equation}\label{eqn:underdetermined}
	p^2(q+2) < pN(q+1) \Leftrightarrow n q > p - n.
\end{equation}
Expanding out \autoref{eqn:SR=Id} we have
\begin{align*}
	\mathrm{Id} &= \mathfrak{S} \circ \mathfrak{R}_u\\
	&= \big(\mathfrak{S}^0 + \mathfrak{S}^1\partial_t + \cdots + \mathfrak{S}^q \partial_t^q\big) \circ \big(\mathfrak{R}_u^0 + \mathfrak{R}_u^1 \partial_t\big)\\
	&= \Big(\mathfrak{S}^0 \mathfrak{R}_u^0 + \mathfrak{S}^1 \partial_t \mathfrak{R}_u^0 + \cdots + \mathfrak{S}^q \partial_t^q \mathfrak{R}_u^0\Big) \\
	&\qquad + \Big(\mathfrak{S}^0 \mathfrak{R}_u^1 + \mathfrak{S}^1 \big(\mathfrak{R}_u^0 + \partial_t \mathfrak{R}_u^1\big) + \cdots \mathfrak{S}^q \big(q\partial_t^{q-1} \mathfrak{R}_u^0 + \partial_t^q \mathfrak{R}_u^1\big)\Big) \partial_t\\
	&\qquad + \cdots \\
	&\qquad + \Big(\mathfrak{S}^{q-1}\mathfrak{R}^1_u + \mathfrak{S}^q\big(\mathfrak{R}_u^0 + q\partial_t\mathfrak{R}_u^1\big)\Big)  \partial_t^q\\
	&\qquad + \mathfrak{S}^q \mathfrak{R}_u^1 \partial_t^{q+1}
\end{align*} 
Comparing both sides, we get the block-matrix system:
\begin{equation} \label{eqn:SR=Id_Matrix}
	\resizebox{\linewidth}{!}{$\begin{aligned}
		\begin{pmatrix}
			\mathfrak{S}^0 & \mathfrak{S}^1 & \cdots & \mathfrak{S}^{q-1} & \mathfrak{S}^q
		\end{pmatrix}
		\begin{pmatrix}
			\mathfrak{R}_u^0 & \mathfrak{R}_u^1 & \cdots & 0 & 0\\
			\partial_t \mathfrak{R}_u^0 & \mathfrak{R}_u^0 + \partial_t \mathfrak{R}_u^1 & \cdots & 0 & 0\\
			\vdots & \vdots & \vdots & \vdots \\
			\partial_t^{q-1} \mathfrak{R}_u^0 & (q-1)\partial_t^{q-2}\mathfrak{R}_u^0 + \partial_t^{q-1}\mathfrak{R}_u^1 & \cdots & \mathfrak{R}_u^1 & 0\\
			\partial_t^q \mathfrak{R}_u^0 & q\partial_t^{q-1} \mathfrak{R}_u^0 + \partial_t^q \mathfrak{R}_u^1 & \cdots & \mathfrak{R}_u^0 + q\partial_t \mathfrak{R}_u^1 & \mathfrak{R}_u^1
		\end{pmatrix} \\
		=
		\begin{pmatrix}
			\mathrm{Id}_{p\times p} &
			0_{p\times p} &
			\cdots &
			0_{p\times p} &
			0_{p\times p}
		\end{pmatrix}
	\end{aligned}$}
\end{equation}
Let us denote
\begin{equation}\label{eqn:R_uLambdaNotation}
	R_u \coloneqq - \left( \mathfrak{L}_u^0 - \partial_t \mathfrak{L}_u^1 \right) = \begin{pmatrix}
		\iota_{u_*\partial_t} d\lambda^s
	\end{pmatrix}_{p\times N}, \quad \Lambda \coloneqq \mathfrak{L}_u^1 = \begin{pmatrix}
		\lambda^s_\mu \circ u
	\end{pmatrix}_{p\times N},
\end{equation}
so that we have from \autoref{eqn:R_uAdjointFormula}
\begin{equation}
	\mathfrak{R}_u^0 = -\left( R_u \right)^\dagger \quad \text{and} \quad \mathfrak{R}_u^1 = -\Lambda^\dagger.
\end{equation} 
Taking the adjoint of the coefficient matrix in \autoref{eqn:SR=Id_Matrix} and multiplying by $-1$ we then get the following matrix.
\begin{equation}\label{matrix:original}
	A \coloneqq \begin{pmatrix}
		R_u & \partial_t R_u & \cdots & \partial_t^{q-1}R_u & \partial_t^q R_u \\
		\Lambda & R_u + \partial_t \Lambda &  \cdots & (q-1)\partial_t^{q-1}R_u + \partial_t^{q-1}\Lambda & q\partial_t^{q-1}R_u + \partial_t^q \Lambda \\
		0 & \Lambda & \cdots & \cdots & \cdots \\
		\vdots & \vdots & \ddots & \vdots & \vdots \\
		0 & 0 & \cdots & \Lambda & R_u + q\partial_t\Lambda \\
		0 & 0 & \cdots & 0 & \Lambda \\
	\end{pmatrix}.
\end{equation}
$A$ is a matrix of size $p(q+2) \times N(q+1)$, which depends on the $(q+1)^{\text{th}}$-jet of the map $u$. The rows of this matrix can be linearly independent only if $nq \ge p - n$ holds (\autoref{eqn:underdetermined}). Under the full rank condition, one can always solve for $\mathfrak{S}^i$ smoothly in \autoref{eqn:SR=Id_Matrix}, and thus solving \autoref{eqn:SR=Id}. It should be noted that there is no unique solution, but instead, we have an affine space of them. The full rank of $A$ will enable us to choose a solution that varies smoothly depending on $j^{q+1}_u(x)$.

\begin{defn}\label{defn:weaklyRegular}
	For some fixed $q$ satisfying $n q \ge p-n$, define the relation $\mathcal{W} \subset J^{q+1}(\mathbb{R}, \mathbb{R}^N)$:
	\[\mathcal{W} = \Big\{j^{q+1}_u(x) \;\Big|\; \text{$du_x$ is injective and the matrix $A = A\big(j^{q+1}_u(x)\big)$ has full (row) rank} \Big\}.\]
	A smooth solution of $\mathcal{W}$ is called a \emph{$\mathcal{W}$-regular} or \emph{weakly $(d\lambda^s)$-regular} map. We denote by $\Sol \mathcal{W}$ the space of all $\mathcal{W}$-regular maps.
\end{defn}

\begin{lemma}
	$(d\lambda^s)$-regular maps are $\mathcal{W}$-regular.
\end{lemma}
\begin{proof}
	If an immersion $u: \mathbb{R} \to \mathbb{R}^N$ is $(d\lambda^s)$-regular, then the block $\begin{pmatrix} R_u \\ \Lambda \end{pmatrix}_{2p \times N}$ in the top-left corner of $A$ has full (row) rank, which makes the first two row-blocks of $A$ full rank. On the other hand, the $\Lambda$ blocks on the `off-diagonal' are always full rank, since the rows of $\Lambda$ consist of linearly independent $1$-forms $\{\lambda^s\}$. Note that there is no overlap between the $\begin{pmatrix} R_u \\ \Lambda \end{pmatrix}_{2p \times N}$ block and the rest of the diagonal $\Lambda$ blocks. Hence, the rows of $A$ are linearly independent whenever $u$ is $(d\lambda^s)$-regular, i.e., $u$ is then $\mathcal{W}$-regular.
\end{proof}

In light of the above lemma, one observes that the first row-block of $A$ is the one where the rank of $A$ might drop, and one may consider $\mathcal{W}$-regularity as the natural higher order analog of $d\lambda^s$-regularity. This observation shall become more clear in the proof of \autoref{thm:jetLifting}. Let us now show that $\mathfrak{L}_u$ admits a universal right inverse over $\mathcal{W}$-regular maps, i.e., one can solve $\mathfrak{L}_u \circ \mathfrak{M}_u = \mathrm{Id}$ for any $\mathcal{W}$-regular map $u$, such that $\mathfrak{M}_u$ depends smoothly on $u$.

\begin{prop}\label{prop:inversion}
	Fix $q$ satisfying $nq \ge p - n$ and the relation $\mathcal{W} \subset J^{q+1}(\mathbb{R}, \mathbb{R}^N)$. Then, for $\mathcal{W}$-regular maps $u : \mathbb{R} \to \mathbb{R}^N$, there exists a linear partial differential operator $\mathfrak{M}_u : C^\infty(\mathbb{R}, \mathbb{R}^p) \to C^\infty(\mathbb{R}, \mathbb{R}^N)$ of order $q$, satisfying $\mathfrak{L}_u \circ \mathfrak{M}_u = \mathrm{Id}$. Furthermore, the assignment \[\Sol \mathcal{W}\times C^\infty(\mathbb{R}, \mathbb{R}^p) \ni (u,P) \mapsto \mathfrak{M}(u,P) \coloneqq \mathfrak{M}_u(P) \in C^\infty(\mathbb{R}, \mathbb{R}^N)\] is a differential operator, nonlinear of order $2q+1$ in the first variable.
\end{prop}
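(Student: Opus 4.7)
The plan is to carry through the algebraic strategy set up just before the statement. We look for an operator $\frS = \sum_{i=0}^{q} \frS^i \partial_t^i$ solving $\frS \circ \frR_u = \Id$ as in \autoref{eqn:SR=Id}, and then recover $\frM_u$ by taking the formal adjoint: since $\frR_u = \frL_u^\dagger$, dualizing $\frS \circ \frR_u = \Id$ gives $\frL_u \circ \frS^\dagger = \Id$, so we may set $\frM_u := \frS^\dagger$.

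To produce $\frS$, I would solve the block matrix equation \autoref{eqn:SR=Id_Matrix}. Taking adjoints on both sides and using the notation of \autoref{eqn:R_uLambdaNotation}, this system is equivalent to $A \cdot X = E$, where $A$ is the matrix in \autoref{matrix:original}, the unknown $X$ is obtained by stacking the columns of the transposes of $\frS^0, \ldots, \frS^q$, and $E$ has $\Id_{p\times p}$ as its first $p$ rows and zeros below. The $\calW$-regularity hypothesis is precisely the statement that $A = A(j^{q+1}_u(t))$ has full row rank $p(q+2)$ at every $t\in \bbR$, which by dimension count forces $nq \ge p-n$. Under this condition $A A^\dagger$ is invertible along $u$, so
\[ B(j^{q+1}_u) := A^\dagger (A A^\dagger)^{-1} \]
is a smooth right-inverse of $A$ defined on all of $\Sol \calW$; setting $X := B \cdot E$ yields a smooth family of solutions $\frS^i = \frS^i(j^{q+1}_u)$ depending rationally on the entries of $A$.

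With $\frS$ in hand, I would then compute $\frM_u = \frS^\dagger$ explicitly in order to count derivatives. The coefficients $\frS^i$ depend on $j^{q+1}_u$, and $\frS$ has order $q$ in its argument. Forming the formal adjoint amounts to Leibniz-style differentiations that move the $\partial_t^i$ onto the coefficients: expanding $(-\partial_t)^i(\frS^{i,\dagger} w) = \sum_{j\le i} \binom{i}{j}(-1)^i (\partial_t^j \frS^{i,\dagger}) (\partial_t^{i-j} w)$ shows that $\frM_u$ remains of order $q$ in $w$. Since each $\partial_t$ applied to $\frS^i(j^{q+1}_u)$ bumps the jet-order of $u$ by one and the maximum value of $j$ is $q$, the coefficients of $\frM_u$ are smooth functions of $j^{2q+1}_u$. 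Hence the assignment $(u,P) \mapsto \frM_u(P)$ is a differential operator of order $q$ in $P$ and nonlinear of order $2q+1$ in $u$, as claimed.

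The main obstacle I expect is bookkeeping: carefully tracking the sign conventions from \autoref{eqn:R_uAdjointFormula} through the adjoint transposition of \autoref{eqn:SR=Id_Matrix} to confirm that $A \cdot X = E$ is indeed equivalent to the original system, and then verifying $\frL_u \circ \frS^\dagger = \Id$ without dropping signs. A secondary subtlety is asserting smoothness (rather than mere pointwise existence) of the right-inverse over $\Sol \calW$; this is automatic because full row rank is an open condition on $(q+1)$-jets and $(AA^\dagger)^{-1}$ depends rationally, hence smoothly, on $j^{q+1}_u$ wherever it exists, so the Moore--Penrose formula provides the desired smooth family parametrized by $\Sol \calW$.
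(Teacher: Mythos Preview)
Your proposal is correct and follows essentially the same outline as the paper: solve $\frS\circ\frR_u=\Id$ algebraically via the block system \autoref{eqn:SR=Id_Matrix}, then set $\frM_u=\frS^\dagger$ and count orders. The one genuine point of departure is how you globalize the right-inverse of $A$. The paper solves \autoref{eqn:SR=Id_Matrix} locally near each jet $\sigma\in\calW$ by rational formulas whose denominators are nonvanishing minors of $A$, and then patches these local solutions together with a partition of unity on $\calW$ (exploiting linearity of $\Delta_\alpha$ in the second argument to make the averaging work). You instead invoke the Moore--Penrose formula $B=A^\dagger(AA^\dagger)^{-1}$, which is a single smooth expression valid on the entire full-rank locus and hence on all of $\Sol\calW$; this bypasses the partition-of-unity step entirely. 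Your route is shorter and more explicit, at the cost of fixing the standard Euclidean inner product on $\bbR^N$ and $\bbR^p$ (harmless here since we are already in coordinates). The paper's route is more in the spirit of Gromov's general framework for algebraic inversion of underdetermined operators, but for this concrete situation your pseudoinverse argument is cleaner.
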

\begin{proof}
	Fix a jet $\sigma = j^{q+1}_u(t) \in \mathcal{W}|_t$, represented by some map $u : \Op(t) \to \mathbb{R}^N$. The first order operator $\mathfrak{R}_u$ defined on $\Op(t)$ gives rise to the (linear) symbol map \[\Delta_{\mathfrak{R}_\sigma} : J^{1}(\mathbb{R}, \mathbb{R}^p)|_t \to J^0(\mathbb{R},\mathbb{R}^N)|_t = C^\infty(\mathbb{R}, \mathbb{R}^N)|_t.\]
	For any jet $j^1_P(t) \in J^1(\mathbb{R}, \mathbb{R}^p)|_t$ represented by some $P : \Op(t) \to \mathbb{R}^p$, we then have $\Delta_{\mathfrak{R}_\sigma}(j^1_P(t) ) = \mathfrak{R}_u (P)(t)$. We define $\Delta^{(q)}_{\mathfrak{R}_\sigma} : J^{q+1}(\mathbb{R},\mathbb{R}^p)|_t \to J^q(\mathbb{R}, \mathbb{R}^N)|_t$ by \[\Delta^{(q)}_{\mathfrak{R}_\sigma} \big(j^{q+1}_P(t)\big) = j^{q}_{\mathfrak{R}_u(P)}(t).\] Since $\sigma \in \mathcal{W}$, the matrix $A_\sigma = A(j^{q+1}_u(t))$ has full row rank. We can then readily solve for $\mathfrak{S}^j = \mathfrak{S}^j(\sigma)$ in \autoref{eqn:SR=Id_Matrix}, in terms of rational polynomials of the terms of $A_\sigma$. Indeed, we get a (linear) map $\Delta_{\mathfrak{S}_\sigma} : J^{q}(\mathbb{R},\mathbb{R}^N)|_t \to C^\infty(\mathbb{R}, \mathbb{R}^p)|_t$ satisfying the commutative diagram
	\[\begin{tikzcd}
		J^{q+1}(\mathbb{R},\mathbb{R}^p)|_t \arrow{rr}{\Delta^{(q)}_{\mathfrak{R}_\sigma}} \arrow{rd}[swap]{p^{q+1}_0} && J^q(\mathbb{R},\mathbb{R}^N)|_t \arrow{ld}{\Delta_{\mathfrak{S}_\sigma}} \\
		& J^0(\mathbb{R},\mathbb{R}^p)|_t
	\end{tikzcd}\]
	Consider an open neighborhood $U(\sigma) \subset \mathcal{W}$ of $\sigma$ so that the denominators of all the rational polynomials in $\Delta_{\mathfrak{S}_\sigma}$ remain nonzero for all jets $\tau \in U(\sigma)$. Shrinking $U(\sigma)$ if necessary, assume that $U(\sigma)$ projects down to an open neighborhood $V(\sigma)\subset \mathbb{R}$ of $t$. We then have a smooth map \[\Delta_\sigma : U(\sigma) \times J^{q}(V(\sigma), \mathbb{R}^N) \to C^\infty(V(\sigma), \mathbb{R}^p),\]
	so that for $\tau \in U(\sigma)|_s$ with $s\in V(\sigma)$ and for any $j^{q+1}_P(s)$, the following holds: \[ \Delta_\sigma \Big(\tau, \; \Delta_{\mathfrak{R}_\tau}^{(q)}\big(j^{q+1}_P(s)\big)\Big) = p^{q+1}_0\big(j^{q+1}_P(s)\big) = P(s).\]
	Note that $\Delta_\sigma$ is nonlinear in the first term, whereas it is linear in the second term.
	
	We now have an open cover $\mathfrak{U} = \{U(\sigma)\}_{\sigma\in\mathcal{W}}$ of $\mathcal{W}$. Fix a partition of unity $\{\rho_\alpha\}_{\alpha \in \Lambda}$ on $\mathcal{W}$ subordinate to $\mathfrak{U}$, so that $\supp \rho_\alpha \subset U_\alpha$ for some $U_\alpha \in \mathfrak{U}$. We denote the corresponding open set $V_\alpha \subset \mathbb{R}$ and the map $\Delta_\alpha : U_\alpha \times J^{q}(V_\alpha, \mathbb{R}^N) \to C^\infty(V_\alpha, \mathbb{R}^p)$. Define the bundle map \[\Delta_\mathfrak{S} : \mathcal{W} \times J^{q}(\mathbb{R}, \mathbb{R}^N) \to C^\infty(\mathbb{R},\mathbb{R}^p)\]
	via the formula
	\[\Delta_\mathfrak{S}\big(\tau, \eta\big) \coloneqq \sum_\alpha \Delta_{\alpha}\big(\tau, \rho_\alpha(\tau) \eta \big).\]
	Since each $\Delta_\alpha$ is linear in the second argument, the map $\Delta_\mathfrak{S}$ is well-defined and smooth. Now, for jets $\tau = j^{q+1}_u(s) \in \mathcal{W}$ and $\eta = j^{q+1}_P(s) \in J^{q+1}(\mathbb{R},\mathbb{R}^p)$, we have
	\begin{align*}
		\Delta_\mathfrak{S}\big(\tau, \; \Delta^{(q)}_{\mathfrak{R}_\tau}(\eta)\big) &= \sum_\alpha \Delta_\alpha\big(\tau, \; \rho_\alpha(\tau) \Delta^{(q)}_{\mathfrak{R}_\tau}(\eta)\big) \\
		&= \sum_\alpha \Delta_\alpha\Big(\tau, \; \Delta^{(q)}_{\mathfrak{R}_\tau}\big(\rho_\alpha(\tau) \eta\big)\Big), \quad \text{as $\Delta^{(q)}_{\mathfrak{R}_\tau}$ is linear} \\
		&= \sum_\alpha p^{q+1}_0\big(\rho_\alpha(\tau) \eta \big), \quad \text{by the construction of $\Delta_\alpha$} \\
		&= p^{q+1}_0\Big(\sum_\alpha \rho_\alpha(\tau) \eta \Big) \\
		& = p^{q+1}_0(\eta) \\
		&= P(s)
	\end{align*}
	Define $\mathfrak{S} : \Sol\mathcal{W} \times C^\infty(\mathbb{R},\mathbb{R}^N) \to C^\infty(\mathbb{R}, \mathbb{R}^p)$ via the formula \[\mathfrak{S}(u, \xi) = \Delta_\mathfrak{S}\big(j^{q+1}_u, \; j^{q}_\xi\big).\]
	The operator $\mathfrak{S}$ is nonlinear of order $q+1$ in the first component and linear of order $q$ in the second component. We have $\mathfrak{S}\big(u, \mathfrak{R}_u(P)\big) = P$ for any $u\in \Sol\mathcal{W}$ and $P \in C^\infty(\mathbb{R}, \mathbb{R}^p)$.
	
	Lastly, define the operator $\mathfrak{M} : \Sol\mathcal{W} \times C^\infty(\mathbb{R}, \mathbb{R}^p) \to C^\infty(\mathbb{R}, \mathbb{R}^N)$ by \[\mathfrak{M}(u,P) = \mathfrak{M}_u(P) = \overline{\mathfrak{S}_u}(P),\]
	where $\overline{\mathfrak{S}_u} : C^\infty(\mathbb{R},\mathbb{R}^p) \to C^\infty(\mathbb{R}, \mathbb{R}^N)$ is the formal adjoint to the operator $\mathfrak{S}_u : \xi \mapsto \mathfrak{S}(u,\xi)$. We have \[\mathfrak{L}_u \circ \mathfrak{M}_u = \overline{\mathfrak{R}_u} \circ \overline{\mathfrak{S}_u} = \overline{\mathfrak{S}_u \circ \mathfrak{R}_u} = \overline{\mathrm{Id}} = \mathrm{Id}, \quad \text{for any $u\in \Sol \mathcal{W}$.}\] Clearly $\mathfrak{M}$ is a differential operator, which is linear of order $q$ in the second component. Since taking adjoint of the $q^\text{th}$-order operator $\mathfrak{S}_u$ itself has order $q$, we have $\mathfrak{M}$ is nonlinear of order $2q+1$ in the first component.
\end{proof}

Following Gromov's terminology \cite[pg. 115-116]{gromovBook}, \autoref{prop:inversion} implies that for any $q$ satisfying $nq \ge p - n$, the first order operator $\mathfrak{D} : C^\infty(\mathbb{R}, \mathbb{R}^N) \to C^\infty(\mathbb{R}, \mathbb{R}^p)$ is infinitesimally invertible over $\mathcal{W}$-regular maps, with defect $2q+1$ and order of inversion $q$. For $\alpha \ge 0$, denote the relation of $\alpha$-infinitesimal solutions of $\mathfrak{D} = 0$ as 
\begin{equation}
	\label{eqn:rel:tangentAlpha}
	\mathcal{R}^\alpha_{\textrm{tang}} = \Big\{j^{\alpha+1}_u(x) \; \Big| \; j^{\alpha}_{\mathfrak{D}(u)}(x) = 0\Big\} \subset J^{\alpha + 1}(\mathbb{R}, \mathbb{R}^N), \quad \alpha \ge 0.
\end{equation}
Next, for $\alpha \ge (2q + 1) - 1 = 2q$ denote the relation of $\mathcal{W}$-regular $\alpha$-infinitesimal solutions of $\mathfrak{D} = 0$ by
\begin{equation}
	\label{eqn:rel:calWAlpha}
	\mathcal{W}_\alpha = (p^{\alpha + 1}_{q+1})^{-1}(\mathcal{W}) \; \cap \; \mathcal{R}^{\alpha}_{\textrm{tang}} \subset J^{\alpha+1}(\mathbb{R}, \mathbb{R}^N).
\end{equation}
Each $\mathcal{W}_\alpha$ has the same $C^\infty$-solutions for $\alpha \ge 2q$, namely, the $\mathcal{W}$-regular $\mathcal{D}$-horizontal curves. Denote the sheaves 
\begin{equation}
	\Phi^\mathcal{W} = \Sol \mathcal{W}_\alpha \quad \text{and} \quad \Psi_\alpha^\mathcal{W} = \Gamma \mathcal{W}_\alpha, \qquad \text{for $\alpha \ge 2q$.}
\end{equation}
A direct application of the results in \cite[pg. 118-120]{gromovBook} then gives us the following.
\begin{theorem}\label{thm:microflexibleLocalWHE}
	Fix $q$ satisfying $nq \ge p - n$, where $p = \cork \mathcal{D}, n = \rk \mathcal{D}$. Then the following holds.
	\begin{itemize}
		\item $\Phi^\mathcal{W}$ is microflexible.
		\item For any \[\alpha \ge \max\{2q + 1 + q, \; 2.1 + 2q \} = 3q + 1,\] the jet map $j^{\alpha + 1} : \Phi^\mathcal{W} \to \Psi^\mathcal{W}_\alpha$ is a local weak homotopy equivalence. 
	\end{itemize}
\end{theorem}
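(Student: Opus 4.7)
The plan is to recognize the statement as a direct application of Gromov's general framework for infinitesimally invertible differential operators, with \autoref{prop:inversion} supplying the required input. That proposition produces an operator
\[ \frM : \Sol \calW \times C^\infty(\bbR, \bbR^p) \to C^\infty(\bbR, \bbR^N), \]
linear of order $q$ in the second variable and nonlinear of order $2q + 1$ in the first, satisfying $\frL_u \circ \frM_u = \Id$ for every $u \in \Sol \calW$. In Gromov's terminology (\cite[pg.~115--116]{gromovBook}), this is precisely an \emph{infinitesimal inversion} of the first order operator $\frD$ over the open subset $\calW \subset J^{q+1}(\bbR, \bbR^N)$, with order of inversion $q$ and defect $d = 2q + 1$.

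For the first bullet, I would invoke Gromov's microflexibility theorem for infinitesimally invertible operators (\cite[pg.~118]{gromovBook}): the sheaf of smooth solutions of a differential relation admitting such an inversion is microflexible over the open set where the inversion is defined. Since the $C^\infty$-solutions of each $\calW_\alpha$ ($\alpha \ge 2q$) coincide with the $\calW$-regular $\calD$-horizontal curves, namely $\Phi$, this yields microflexibility of $\Phi$ directly. Morally, the mechanism is a convergent Newton-type scheme built out of $\frM$: small $\frD$-defects on a compact set are absorbed by applying $\frM$ iteratively, provided the spatial scale is small enough to control Taylor remainders, which is exactly the microflexibility estimate.

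For the second bullet, I would invoke Gromov's local h-principle (\cite[pg.~120]{gromovBook}), which asserts that the jet map $j^{\alpha+1} : \Phi \to \Psi_\alpha$ is a local weak homotopy equivalence once $\alpha$ exceeds both $d + q$ (needed for the non-parametric Newton scheme to close up on $(q+1)$-jets after applying the $q$-order inversion) and $2\dim\Sigma + 2q$ (needed for its parametric refinement on $\Sigma = \bbR$). In the present setting $d = 2q+1$ and $\dim\Sigma = 1$, which read $\alpha \ge 3q + 1$ and $\alpha \ge 2q + 2$; the first dominates and yields the stated condition. The main obstacle has really already been overcome in \autoref{prop:inversion}; what remains is essentially bookkeeping, with the only genuine point to verify being that the asymmetry of $\frM$ (nonlinear of order $2q+1$ in $u$, linear of order $q$ in the source variable) fits verbatim into Gromov's hypotheses, which it does by the very definitions of defect and order of inversion.
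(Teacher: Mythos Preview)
Your proposal is correct and matches the paper's approach exactly: the paper itself gives no proof beyond the single sentence ``A direct application of the results in \cite[pg.~118--120]{gromovBook} then gives us the following,'' having already recorded (immediately after \autoref{prop:inversion}) that $\frD$ is infinitesimally invertible over $\calW$-regular maps with defect $2q+1$ and order of inversion $q$. Your write-up is in fact more detailed than the paper's, correctly unpacking the two thresholds $d+q = 3q+1$ and $2\dim\Sigma + 2q = 2q+2$ that appear in the statement.
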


\begin{remark} \label{rmk:calWIndependentOfChoice}
	It should be noted that the $\mathcal{W}$-regularity of $\mathcal{D}$-horizontal maps $\mathbb{R} \to M$ is independent of any choice of coordinates on $M$ or choice of defining $1$-forms $\lambda^s$. Indeed, these are precisely the class of maps $\mathbb{R} \to M$ over which the operator $\mathfrak{D}$ (\autoref{eqn:opD}) is infinitesimally invertible. Since the solution space $\mathfrak{D}=0$ is independent of any choice, so is the regularity of such maps.
\end{remark}

\subsection{Local $h$-Principle for $\mathcal{W}$-Regular Horizontal Curves} To keep the notation light, throughout the rest of this section, we shall treat any higher jet $j^q_u(x)$ formally as variables. That is, $j^q_u(x)$ really represents the tuple of \emph{formal} maps $\big(F^i : \odot^i T_x\mathbb{R} \to T_{u(x)}\mathbb{R}^N, 1\le i\le q\big)$ in the jet space $J^q_{(x,u(x))}(\mathbb{R},\mathbb{R}^N)$, and each component $\partial_t^i u(x) \equiv F^i(\partial_t^i) \in \mathbb{R}^N$ are independent variables. For any $1$-form $\lambda$ defining $\mathcal{D}$ near $y = u(x)$, the components of the higher jets $j^q_{\lambda}(y)$ will be treated as known scalar values.

Now, consider the first order relation
\begin{equation}
	\mathcal{R}_{\textrm{imm-tang}} = \Big\{j^1_u(x) \in J^1(\mathbb{R}, \mathbb{R}^N) \;\Big|\; \text{$du_x$ is injective and $\im du_x \subset \mathcal{D}_{u(x)}$}\Big\}.
\end{equation}
The solution sheaf $\Sol \mathcal{R}_{\textrm{imm-tang}}$ consists of all the $\mathcal{D}$-horizontal immersed curves. It follows from \autoref{eqn:rel:tangentAlpha} that $\mathcal{R}_{\textrm{imm-tang}} \subset \mathcal{R}^0_{\textrm{tang}}$. For any $\alpha \ge q$, where $q$ satisfies $nq \ge p - n$, we have from \autoref{eqn:rel:calWAlpha} that the jet projection map $p^{\alpha+1}_1 : J^{\alpha+1}(\mathbb{R},\mathbb{R}^N) \to J^1(\mathbb{R},\mathbb{R}^N)$ restricts to a map \[p^{\alpha + 1}_1|_{\mathcal{W}_\alpha} : \mathcal{W}_\alpha \to \mathcal{R}_{\textrm{imm-tang}}.\]
In fact, we have the following commutative diagram.
\[\begin{tikzcd}
	\mathcal{W}_\alpha \arrow{d}[swap]{p^{\alpha+1}_1|_{\mathcal{W}_\alpha}} \arrow[hookrightarrow]{r} & \mathcal{R}^\alpha_{\textrm{tang}} \arrow{d}{p^{\alpha+1}_1|_{\mathcal{R}^\alpha_{\textrm{tang}}}} \arrow[hookrightarrow]{r} & J^{\alpha+1}(\mathbb{R}, \mathbb{R}^N) \arrow{d}{p^{\alpha+1}_1}\\
	\mathcal{R}_{\textrm{imm-tang}} \arrow[hookrightarrow]{r} & \mathcal{R}^0_{\textrm{tang}} \arrow[hookrightarrow]{r} & J^1(\mathbb{R},\mathbb{R}^N)
\end{tikzcd}\]
Note that $p^{\alpha+1}_1$ is an affine bundle. Let us now make the following easy observation.
\begin{lemma}\label{lemma:infinitesimalLift}
	For any $\sigma \in \mathcal{R}^0_{\textnormal{tang}}$, the fiber of $p^{\alpha+1}_1|_{\mathcal{R}^\alpha_{\textnormal{tang}}}$ over $\sigma$ is contractible.
\end{lemma}
\begin{proof}
	A jet $j^{\alpha+1}_u(x) \in \mathcal{R}^\alpha_{\textrm{tang}} \subset J^{\alpha+1}(\mathbb{R},\mathbb{R}^N)$ is defined by the equation $j^\alpha_{\mathfrak{D}(u)}(x)=0$ (\autoref{eqn:rel:tangentAlpha}), which expands to the following system:
	\begin{equation}\label{eqn:infSol:mainEqn}
		\left.\begin{aligned}
			&0 = \partial_t^k\big(u^*\lambda^s(\partial_t)\big)\big|_x = \partial_t^k \big((\lambda^s_\mu \circ u) \partial_t u^\mu\big)\big|_x \\
			\Rightarrow &\big(\lambda^s_\mu \circ u\big) \partial_t^{k+1} u^\mu\big|_x + \text{terms involving $j^k_u(x)$} = 0, \quad 1 \le s \le p, \; 0 \le k \le \alpha.		
		\end{aligned}\right\}
	\end{equation}
	Recall the matrix $\Lambda = \begin{pmatrix} \lambda^s_\mu \circ u (x) \end{pmatrix}_{p \times N}$ from \autoref{eqn:R_uLambdaNotation} and denote by $\partial_t^k u$ the column vector $\partial_t^k u = \begin{pmatrix} \partial_t^k u^\mu(x) \end{pmatrix}_{N\times 1}$. Then, \autoref{eqn:infSol:mainEqn} can be expressed as the following affine system:
	\begin{equation}\label{eqn:infSol:mainSystem}
		\left.
		\begin{aligned}
			\Lambda \partial_t u &= 0 \\
			\Lambda \partial_t^2 u &= \begin{pmatrix} - (\partial_\nu \lambda^s_\mu \circ u) \partial_t u^\nu \partial_t u^\mu \end{pmatrix}_{p\times 1}\Big|_x \\
			&\vdots\\
			\Lambda \partial_t^{\alpha+1} u &= \text{$p\times 1$ vector involving $j^\alpha_u(x)$}
		\end{aligned}
		\right\}
	\end{equation}
	Note that $\mathcal{R}^0_{\textrm{tang}}|_{(x,u(x))} = \ker\Lambda$. Since $\Lambda$ has full rank, given any value of $\sigma = j^1_u(x) \equiv \partial_t u \in \mathcal{R}^0_{\textrm{tang}}$, the above system can always be solved in a triangular way. Clearly, at each step the solution space is affine. It follows that the fiber of $p^{\alpha+1}_1|_{\mathcal{R}^\alpha_{\textrm{tang}}}$ over $\sigma$ is contractible.
\end{proof}

The discussion so far had no extra assumption on the distribution $\mathcal{D}$. From this point onwards, we shall only consider equiregular, bracket-generating distributions. The main goal of this section is to prove the following.

\begin{theorem}\label{thm:jetLifting}
	Let $\mathcal{D}$ be an equiregular bracket-generating distribution of rank $n$ and corank $p$ on $\mathbb{R}^{N = n+p}$, with type $\mathfrak{m}$. Let $q_0$ satisfy $n q_0 \ge p-n$. Fix a jet $\sigma = j^1_u(x) \in \mathcal{R}_{\textnormal{imm-tang}}$. Then for each $K \ge 1$, there exists some $q(\mathfrak{m}, K) \ge q_0$, such that for any $\alpha \ge q(\mathfrak{m}, K)$ the complement of the fiber $\mathcal{W}_\alpha|_{\sigma} = \left( p^{\alpha+1}_1|_{\mathcal{W}_\alpha} \right)^{-1}(\sigma)$ has codimension at least $K$ in $\mathcal{R}^\alpha_{\textnormal{tang}}|_{\sigma} = \left( p^{\alpha+1}_1|_{\mathcal{R}^\alpha_{\textnormal{tang}}} \right)^{-1}(\sigma)$.
\end{theorem}

As a corollary, we get the local $h$-principle for $\mathcal{W}$-regular horizontal maps.
\begin{corr}\label{corr:localWHE}
	The sheaf map $\Phi^\mathcal{W} \to \Gamma \mathcal{R}_{\textnormal{imm-tang}}$ induced by the differential map is a local weak homotopy equivalence.
\end{corr}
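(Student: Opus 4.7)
The plan is to factor the differential map through the jet levels introduced in the previous section and reduce to the two main ingredients already established, namely \autoref{thm:microflexibleLocalWHE} and \autoref{thm:jetLifting}. Concretely, I would write
\[\Phi \xrightarrow{j^{\alpha+1}} \Gamma\calW_\alpha \hookrightarrow \Gamma\calR^\alpha_{\textrm{tang}} \xrightarrow{p^{\alpha+1}_1} \Gamma\calR_{\textrm{tang}},\]
and check that each arrow is a local weak homotopy equivalence once $\alpha$ is taken sufficiently large.

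The first arrow is supplied directly by \autoref{thm:microflexibleLocalWHE} as soon as $\alpha \ge 3q+1$. For the last arrow, I would argue that $\calR^\alpha_{\textrm{tang}} \to \calR_{\textrm{tang}}$ is an iterated affine bundle with contractible fibers: the condition $\partial_t^k(u^*\lambda^s)|_x = 0$ depends on the top jet variable $\partial_t^{k+1}u^\mu$ only through the linear term $\lambda^s_\mu(u)\,\partial_t^{k+1}u^\mu$, and the coefficient matrix $\Lambda$ has constant full row rank $p$. Thus each step in the tower cuts out an affine fiber of dimension $n$, and the induced map of sheaves of sections over the contractible base $\bbR$ is a weak homotopy equivalence in the standard way.

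The substantive step is the middle inclusion $\Gamma\calW_\alpha \hookrightarrow \Gamma\calR^\alpha_{\textrm{tang}}$. Working locally near a point $x_0 \in \bbR$, only finitely many types $\frm$ occur along nearby formal solutions, so taking the maximum of the values $q(\frm, K)$ from \autoref{thm:jetLifting} yields a single $q$ for which the complement of $\calW_\alpha|_\sigma$ in $\calR^\alpha_{\textrm{tang}}|_\sigma$ has codimension at least $K$ uniformly in $\sigma$. Promoting this fiberwise estimate to codimension $\ge K$ in each fiber of $\calR^\alpha_{\textrm{tang}} \to \bbR$ and invoking parametric jet transversality, any $k$-parameter family of continuous sections of $\calR^\alpha_{\textrm{tang}}$ can be deformed rel boundary into $\calW_\alpha$ whenever $K > k+1$; letting $K$ tend to infinity delivers the local weak homotopy equivalence.

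The main obstacle I anticipate is this transversality step: the estimate of \autoref{thm:jetLifting} is pointwise and type-dependent, so upgrading it to a uniform bound on a compact family of formal solutions rests on the local finiteness of the type stratification noted above. Once uniformity is in hand, the perturbation itself is routine, since $\calR^\alpha_{\textrm{tang}}$ is a smooth closed subbundle of $J^{\alpha+1}(\bbR,\bbR^N)$ and Thom-type transversality applies within this subbundle rather than in the full jet space.
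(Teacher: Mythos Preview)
Your approach is correct and closely parallels the paper's, with a different packaging of the homotopy-theoretic step. The paper does not split off $\Gamma\calR^\alpha_{\textrm{tang}}$ separately; instead it observes directly that the fiber of $\calW_\alpha \to \calR_{\textrm{tang}}$ over $\sigma$ is $(K-2)$-connected (being an affine space minus a set of codimension $\ge K$), passes to $\alpha = \infty$ to get weakly contractible fibers, and then invokes Gromov's argument (\cite[pg.~77]{gromovBook}) to conclude that $p^\infty_1 : \Gamma\calW_\infty \to \Gamma\calR_{\textrm{tang}}$ is a local weak homotopy equivalence; composing with $j^\infty : \Phi \to \Gamma\calW_\infty$ from \autoref{thm:microflexibleLocalWHE} finishes. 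Your route replaces this with Thom transversality at finite jet level, pushing $k$-parameter families of formal sections off the high-codimension bad locus. Both arguments extract the same content from \autoref{thm:jetLifting}; the paper's is terser and leans on Gromov's black box, yours is more hands-on and avoids the inverse limit. One imprecision worth flagging: the middle inclusion $\Gamma\calW_\alpha \hookrightarrow \Gamma\calR^\alpha_{\textrm{tang}}$ is not a full local weak homotopy equivalence for any fixed finite $\alpha$, only a $k$-equivalence with $k$ growing with $\alpha$; since the composite $\Phi \to \Gamma\calR_{\textrm{tang}}$ is $\alpha$-independent this suffices, but your opening sentence (``each arrow is a local weak homotopy equivalence once $\alpha$ is taken sufficiently large'') overstates what actually holds for that arrow. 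Finally, your worry about uniformity in the type $\frm$ is harmless: there are only finitely many possible types for a rank-$n$, corank-$p$ distribution on $\bbR^N$, so $\max_\frm q(\frm,K)$ is finite without any compactness argument.
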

\begin{proof}
	Fix some jet $\sigma \in \mathcal{R}_{\textrm{imm-tang}}$. It follows from \autoref{thm:jetLifting} and \autoref{lemma:infinitesimalLift} that the fiber of $p^{\alpha+1}_1|_{\mathcal{W}_\alpha}$ over $\sigma$ is $K$-connected for $\alpha$ sufficiently large. Hence, passing to the infinity jet, we get the fiber is weakly contractible. By an argument of Gromov \cite[pg. 77]{gromovBook}, the sheaf map $p^\infty_1 : \Psi^\mathcal{W}_\infty = \Gamma \mathcal{W}_\infty \to \Gamma \mathcal{R}_{\textrm{imm-tang}}$ is then a local weak homotopy equivalence. Also, from \autoref{thm:microflexibleLocalWHE} we have the sheaf map $j^\infty : \Phi^\mathcal{W}  \to \Psi^\mathcal{W}_\infty$ is a local weak homotopy equivalence. But then the composition $p^\infty_1 \circ j^\infty : \Phi^\mathcal{W} \to \Gamma \mathcal{R}_{\textrm{imm-tang}}$ is a local weak homotopy equivalence as well. Note that the composition map is nothing but the differential map $u \mapsto du$.
\end{proof}

To prove \autoref{thm:jetLifting}, we need to understand the equations involved in defining the relation $\mathcal{W}_\alpha \subset J^{\alpha + 1}(\mathbb{R}, \mathbb{R}^N)$ as in \autoref{eqn:rel:calWAlpha}. We have already seen in \autoref{lemma:infinitesimalLift} that given a jet $\sigma \in \mathcal{R}_{\textrm{imm-tang}}$, the fiber of $\mathcal{R}^{\alpha}_{\textrm{tang}}|_\sigma$ is the solution space of a triangular affine system (\autoref{eqn:infSol:mainSystem}). But a jet $j^{\alpha+1}_u(x) \in \mathcal{W}_\alpha|_\sigma \subset \mathcal{R}^\alpha_{\textrm{tang}}|_\sigma$ must satisfy $\mathcal{W}$-regularity as well, i.e, the matrix $A = A(j^{\alpha+1}_u(x))$ as given in \autoref{matrix:original} must have independent rows. We note the following features of the matrix that will become useful later in the proof.
\begin{itemize}
	\item The $\Lambda$ blocks in the off-diagonal of $A$ have full rank. Thus, the rank can only drop at the first row-block. In any block above the $\Lambda$-diagonal, the highest order jet term $\partial_t^{q+1}u$ is contributed by the $\partial_t^q R_u$ factor, and it appears linearly. In fact, from \autoref{eqn:R_uLambdaNotation} we have 
		\begin{equation}\label{eqn:partial_t_q_Ru_original}
			\partial_t^q R_u = \begin{pmatrix} d\lambda^s(\partial_t^{q+1} u, \partial_\mu)\end{pmatrix}_{p\times N} + \text{a $p\times N$ matrix in $j^q_u(x)$}. 
		\end{equation}
		Furthermore, no component of $\partial_t^{q+1} u$ appears anywhere below the diagonal passing through this block (see \autoref{fig:highestOrderJet}). In particular, in each column-block, the highest order derivative of $u$ occurs in the first row-block only.
		{\tiny
		\begin{figure}[h]
			\centering
    \def\svgwidth{0.5\columnwidth}
    \import{./figures/}{highestOrder.pdf_tex}

			\caption{Highest order jet of $u$ in $A$}
			\label{fig:highestOrderJet}
		\end{figure}}
	
	\item Each column-block of $A$ has $N$ many columns, which can be labeled by the framing $\{\partial_1,\ldots,\partial_N\}$ of $T\mathbb{R}^N$, as it is clear from \autoref{eqn:partial_t_q_Ru_original}. For any arbitrary choice of frame $\{W_1, \ldots, W_N\}$, we can always perform some (invertible) column operations on $A$ so that the columns in the target column-block, say the $(q+1)^{th}$-column-block, are now labeled by $\{W_1, \ldots, W_N\}$. Indeed, if we write $W_i = W_i^j \partial_j$, then one can consider the invertible matrix $W = \begin{pmatrix} W_1^j & \dots & W_N^j\end{pmatrix}_{N\times N}$, so that multiplying the $\partial_t^q R_u$ block from the right by $W$ transforms it into the following block
	\begin{equation}\label{eqn:partial_t_q_Ru}
		\begin{pmatrix} d\lambda^s(\partial_t^{q+1} u, W_\mu)\end{pmatrix}_{p\times N} + \text{a $p\times N$ matrix in $j^q_u(x)$}.
	\end{equation}
	We extend this $W$ to a block-diagonal-matrix $\tilde{W}$ of size $N(\alpha+1) \times N(\alpha+1)$ by putting $W$ as the $(q+1)^{th}$-diagonal block, and $\mathrm{Id}_N$ in all the other diagonal positions. Now, if we multiply the matrix $A$ by $\tilde{W}$ from the right, it will perform column manipulations precisely at the $(q+1)^{th}$-column-block. In particular, the top row-block in this column is now $\left( \partial_t^q R_u \right) W$, and thus by looking at \autoref{eqn:partial_t_q_Ru}, we may label this column-block by $W_1, \dots, W_N$. Note that this process does not change the rank of the matrix, since the column manipulation is invertible by construction.
\end{itemize}

In the proof, for each column-block of $A$, we shall only prescribe a sub-frame of $T\mathbb{R}^N$ (obtained from using \autoref{lemma:specialVectorFields}), which will then be arbitrarily extended to a full frame. Performing the column manipulation as described above will make sure that the target column-block is labeled first by the prescribed sub-frame and then by the arbitrary choice of extension. As we shall see, we are not interested in the columns which are labeled arbitrarily during this process. If the matrix has full (row) rank after discarding a few columns, then the original matrix will also have full rank. Thus, given a sub-frame, say, $(W_1,\dots, W_t)$ for the $(q+1)^{th}$-column-block, we shall say that {\it the $(q+1)^{\textrm{th}}$-column-block is relabeled by the sub-frame}, while discarding the arbitrarily extended part.

Let us now proceed with the proof of \autoref{thm:jetLifting}. We refer to \autoref{sec:toyCase}, where the major steps of the proof are carried out in detail for an example case.

\begin{proof}[Proof of \autoref{thm:jetLifting}]
	Let $\sigma = j^1_u(x) \in \mathcal{R}_{\textrm{imm-tang}}$ be a given jet, and $y = u(x)$. Suppose $\mathcal{D}$ has the type $\mathfrak{m} = (0=m_0 < \cdots < m_{r+1} = N=n+p)$, where $m_s = \dim \mathcal{D}^s_{y}$ for $0 \le s \le r+1$. We denote $p_s = \dim(\mathcal{D}^{s+1}_y/ \mathcal{D}^s_y) = m_{s+1} - m_s$ for $1 \le s \le r$ and set $p_0 = 0$. Using \autoref{lemma:specialVectorFields}, we get the vectors \[\tau^{s,j} \in \mathcal{D}_y, \; \eta^{s,j} \in \mathcal{D}_y^{s} \setminus \mathcal{D}_y^{s-1}, \; \zeta^{s,j} \in \mathcal{D}^{s+1}_y \setminus \mathcal{D}^s_y, \qquad 1 \le j \le p_s,\quad 1 \le s \le r\] at $y$, and the $1$-forms $\lambda^{s,j}$ near $y$. We write the matrix $A$ (\autoref{matrix:original}) in terms of these $\lambda^{s,j}$'s. As observed in \autoref{rmk:calWIndependentOfChoice}, this does not change the relations $\mathcal{W}_\alpha$.\\
	
	\noindent\underline{\it Notations:} We label the row and column-blocks of $A$ starting from $0$, so that the $(0,q)^{\text{th}}$ block is $\partial_t^q R_u$ and the $(q+1, q)^\text{th}$-block is the $\Lambda$ block. We will use the notation $\zeta^{s,\bullet}$ to mean the tuple of vectors $(\zeta^{s,1},\ldots, \zeta^{s,p_s})$ and similarly $\tau^{s,\bullet}, \eta^{s,\bullet}, \lambda^{s,\bullet}$ etc. In particular, the matrix $\Lambda$ is then given by $\Lambda^\dagger = \begin{pmatrix} \lambda^{1,\bullet} & \cdots & \lambda^{p,\bullet} \end{pmatrix}_{N \times p}$. We shall also use the notation $\zeta^{\bullet}$ for the tuple $\big(\zeta^{1,\bullet},\ldots,\zeta^{r,\bullet}\big)$ of size $p$, and $\hat\zeta^{s,\bullet}$ for $1 \le s \le r$ for the tuple of size $p-p_s$, obtained by dropping the tuple $\zeta^{s,\bullet}$ from $\zeta^{\bullet}$. For notational convenience, we set $\hat\zeta^{0,\bullet} = \zeta^\bullet$.\\
	
	Let us first assume $K=1$. We need to show that the complement of $\mathcal{W}_\alpha|_{\sigma}$ in $\mathcal{R}^\alpha_{\textrm{tang}}|_\sigma$ has codimension $1$ for some $\alpha$ large enough. To achieve this, we shall find a polynomial $P$ in the jet variables $j^\alpha_u(x)$ so that $P$ being non-zero at some $\tilde{\sigma} \in \mathcal{R}^\alpha_{\textrm{tang}}|_\sigma$ implies that the matrix $A=A(\tilde{\sigma})$ has full (row) rank. Let us briefly discuss the proof strategy.\\
	
	\noindent\underline{Step $1$} \phantomsection\label{thm:jetLifting:step:1} For each column-block of $A$, we shall prescribe some sub-frame of $T_y\mathbb{R}^N$, consisting of some suitable vectors as obtained by \autoref{lemma:specialVectorFields} (and thus using the fact that $\mathcal{D}$ is bracket-generating). This step (\hyperref[algo:subFrame]{Algorithm 1}) is recursive, and it will determine the value of $q(\mathfrak{m}, 1)$. Extending each sub-frame arbitrarily to a full frame and then relabeling the column-blocks, we get a new matrix, say, $A_1$. Since these operations are invertible, $A$ has full rank if and only if $A_1$ has full rank. \\

	\noindent\underline{Step $2$} \phantomsection\label{thm:jetLifting:step:2} We shall consider the sub-matrix $B$ of $A_1$, with columns labeled by the prescribed sub-frames as above and all the rows of $A_1$. $B$ will be a square matrix and $P = \det B$ will be our candidate polynomial in the higher jet variables $j^{\alpha+1}_u(x)$. Since it is a minor of $A_1$, $\det B \ne 0$ at some higher jet $\tilde{\sigma} \in \mathcal{R}^\alpha_{\textrm{tang}}|_\sigma$ implies that $A(\tilde{\sigma})$ has full rank, i.e, $\tilde\sigma \in \mathcal{W}_\alpha|_\sigma$. We shall keep using the notation $\Lambda$ and $\partial_t^q R_u$ to denote the respective blocks in $B$ obtained after the column transformations and curtailing of $A$. Performing some more (invertible) row and column operations on $B$, we shall produce a new matrix $B_1$, so that $\det B = \det B_1$. Next, we shall extract a square sub-matrix $C$ of $B_1$ and observe that $\det B = \det B_1 = \pm \det C$.\\
	
	\noindent\underline{Step $3$} \phantomsection\label{thm:jetLifting:step:3} It follows from \autoref{eqn:infSol:mainSystem} that if $j^q_u(x)$ is solved, then the solution space for $\partial_t^{q+1} u$ is given as the \emph{affine} space $V_q+ \ker \Lambda = V_q + \mathcal{D}_{u(x)}$, where the $N \times 1$ vector $V_q = V_q(j^q_u(x))$ is obtained using some fixed choice of right inverse $\Lambda^{-1}$. In particular, we can write $\partial_t^{q+1} u = X_q \tau^q + V_q(j^q_u(x))$ for $1 \le q \le \alpha$, where $X_q$ is some indeterminate and $\tau^q$ is a vector suitably set in \hyperref[algo:subFrame]{Algorithm 1} to either $0 \in \mathcal{D}_{y}$ or to one of the vectors $\tau^{s,j} \in \mathcal{D}_{y}$ chosen earlier. Inductively, we then have
	\begin{equation}\label{eqn:partial_t_q_u}
		\partial_t^{q+1} u = X_q \tau^q + \text{terms in $X_1,\ldots, X_{q-1}, \; \tau^1,\ldots,\tau^{q-1}$ and $\sigma = j^1_u(x)$, \quad $1\le q\le \alpha$.}
	\end{equation}
	Arbitrary values of $X_1,\ldots, X_\alpha$ will produce $j^{\alpha+1}_u(x) \in \mathcal{R}^\alpha_{\textrm{tang}}|_\sigma$ from \autoref{eqn:partial_t_q_u}. We will replace the values of $\partial_t^{q+1} u$ in the matrix $C$. Treating $\det C$ as a polynomial in the indeterminates $X_q$ we shall show that $\det C$ is non-vanishing for suitably large values of $X_q$. Thus, $P = \det B$ is non-vanishing when restricted to $\mathcal{R}^\alpha_{\textrm{tang}}|_\sigma$. This will conclude the proof for $K=1$.\\
	
	The crux of the proof lies in suitably choosing the sub-frames for each of the column-blocks of $A$ as done in \hyperref[thm:jetLifting:step:1]{Step $1$}. We produce a schematic diagram to explain the process (\autoref{fig:algorithm}). As discussed earlier, the rank can only drop in the first row-block consisting of $\partial_t^q R_u$, which are represented by the red boxes in \autoref{fig:algorithm}, whereas the blue boxes represent the $\Lambda$ blocks. Suppose we have dealt with the rows corresponding to $d\lambda^{1,1}, \dots d\lambda^{s^\prime,j^\prime}$ of the first row-block in a manner that, ignoring the rest of the rows from the first row-blcok, the matrix $A$ up to, say, the $q^{th}$-column-block has full rank. We could keep choosing $\zeta^\bullet$ for the subsequent column-blocks, which will transform all the $\Lambda$ blocks appearing in those column-blocks into $\mathrm{Id}_p$, and thus make sure that the matrix $A$, after \emph{ignoring} the rest of the rows from the first row-block, is indeed full rank.
	
	{\tiny
		\begin{figure}[h]
			\centering
    \def\svgwidth{0.5\columnwidth}
    \import{./figures/}{algorithm.pdf_tex}

			\caption{How \hyperref[algo:subFrame]{Algorithm 1} chooses the labeling sub-frames.}
			\label{fig:algorithm}
	\end{figure}}
	Now, suppose the next row in the first row-block corresponds to $d\lambda^{s, j}$. We label the $(q+1)^{th}$-column-block by the frame $(\hat{\zeta}^{s-1,\bullet},\eta^{s,j})$. It follows from \autoref{eqn:partial_t_q_Ru} that the last column of the $\partial_t^q R_u$ block transforms into 
	\[d\lambda^{s,j}(\partial_t^{q+1}u, \eta^{s,j}) + \text{a $p \times 1$ vector in $j^q_u(x)$}.\]
	Since $\partial_t^{q+1} u$ is a jet that did not appear earlier in the matrix (\autoref{fig:highestOrderJet}), we can prescribe its value arbitrarily to make sure that (at least) the row corresponding to $d\lambda^{s,j}$ in $\partial_t^q R_u$ must be linearly independent. Indeed, it follows from \autoref{lemma:infinitesimalLift}, that given $j^q_u(x)$, we may choose $\partial_t^{q+1} u$ from an \emph{affine} space, which is parallel to the distribution $\mathcal{D} = \ker \Lambda$, and so, given a particular solution of $\partial_t^{q+1} u$, we can always add a vector proportional to $\tau^q = \tau^{s,j} \in \mathcal{D}^s$. This is done in \hyperref[thm:jetLifting:step:3]{Step $3$}, by adding $X_q \tau^q$ for large value of $X_q$. It follows from \autoref{lemma:infinitesimalLift} that this will introduce an $X_q$ variable (with coefficient $1$) at the $d\lambda^{s,j}$ row, which does not appear anywhere below this row. Note that the rows appearing above might have some instances of $X_q$, but these rows will be taken care of by some $X_{q^\prime}$ variable appearing earlier. In other words, each row gets assigned a unique $X_q$.

	But choosing this sub-frame $\left( \hat{\zeta}^{s-1,j}, \eta^{s,j} \right)$ also introduces a column vector of unknown scalars in the corresponding $\Lambda$-block at the $\lambda^{s - 1,\bullet}$-rows (\autoref{eqn:badLambdaBlock}), and in turn (possibly) reduces the rank for the corresponding row-block. To compensate for this, we choose a sufficiently high jet of $u$ that we have not used (recall that we have utilized the jet $\partial_t^{q+1} u$ so far). Indeed, we may consider $\partial_t^{q^\prime} u$ for some $q^\prime >> q$, and look at the block to the right of this $\Lambda$ block where this jet appears for the first time in this row-block. This is represented by a red dashed arrow in \autoref{fig:algorithm}, the arrowhead pointing to the block (the green box in the figure) which is used to compensate for the drop of rank in the $\Lambda$ block. It follows from \autoref{matrix:original} that this block must look like $c\partial_t^{q^\prime - 1} R_u + \text{a $p\times N$ matrix in $j^{q^\prime-1}_u(x)$}$, for some positive integer $c$. We now perform the same process as above: we choose another sub-frame, say,  $\left( \hat{\zeta}^{s - 2, j^\prime}, \eta^{s - 1, j^\prime} \right)$ for the corresponding column-block, choose a value of $\partial_t^{q^\prime - 1} u$, suitably add some vector proportional to $\tau^{s - 1,j^\prime}$, and thus make sure that at least one of the rows of this row-block is now independent. But now we need to keep doing this recursively, as this will again drop the rank of some $\Lambda$ block down the line. The recursion ends when we choose the frame $(\zeta^\bullet, \eta^{1,j})$, since this frame will transform the corresponding $\Lambda$-block into $\begin{pmatrix} \mathrm{Id}_p & 0_{p \times 1} \end{pmatrix}$, which already has full rank.

	We give a recursive algorithm (\hyperref[algo:subFrame]{Algorithm 1}) to choose the appropriate sub-frames for some $1 \le s \le r$ and some $1 \le j \le p_s$ starting from some column-block $q$, while at the same time suitably fixing the vectors $\tau^{q^\prime} \in \mathcal{D}_y$ for $q^\prime \ge q$ that needs to be used later in \hyperref[thm:jetLifting:step:3]{Step $3$}. The inputs of the algorithm correspond to the situation described above: we have dealt with all the rows appearing before the $d\lambda^{s, j}$-row in the first row-block, using up till the $(q-1)^{th}$-column-block. The integer $d$ corresponds to the depth of the recursion; we begin at $d = 0$, and then $d$ increases as we compensate for subsequent the $\Lambda$ blocks, as discussed above.
	
	\begin{algorithm}[h]\label{algo:subFrame}
		\KwIn{$q, s, j, d$ } 
		\KwOut{$q$ } 
		
		\SetKwFunction{FMain}{ChooseSubFrame}
		\SetKwProg{Fn}{Function}{:}{}
		\Fn{\FMain{$q, s, j, d$}}
		{
			\eIf{$d = 0$}
			{
				\tcp{We are in row-block $0$} 
				$\tau^q \longleftarrow \tau^{s,j}$ \\
			}
			{
				\tcp{We are in row-block $d$}
				$\tau^q \longleftarrow 0$ \\
				$\tau^{q - d} \longleftarrow \tau^{s,j}$
			}
			Pick sub-frame $(\hat\zeta^{s-1,\bullet}, \eta^{s,j})$ for column-block $q$ \\
			$q_0 \longleftarrow q$ \\
			\tcp{If $s = 1$, then $p_{s-1} = 0$ and we do not enter the following loop}
			\For{$1 \le a \le p_{s-1}$}
			{
				\For{$1 \le b \le q_0 + 1$}
				{
					Pick sub-frame $\zeta^\bullet$ for column-block $q + b$ \\
					$\tau^q \longleftarrow 0 \in \mathcal{D}_y$
				}
				$q \longleftarrow$ \FMain{$q + q_0 + 2, s - 1, a, q_0 + 1$}
			}
			\textbf{Return: } $q$
		}
		\textbf{End Function}\\[.5em]
		\caption{Algorithm for choosing sub-frames and $\tau^q$'s.}
	\end{algorithm}
	
	\hyperref[algo:subFrame]{Algorithm 1} outputs the last column-block $q$ for which the sub-frame and the vector $\tau^q$ have been chosen. Note that for $s \ge 2$, the algorithm is recursive. Whereas for $s = 1$, we choose the frame $(\zeta^{0,\bullet}, \eta^{1,j}) = (\zeta^\bullet, \eta^{1,j})$ and do not enter the for-loop since $p_{s-1} = 0$ for $s = 1$. We can now find all the sub-frames, starting from some column-block $q$ by the \hyperref[algo:allSubFrame]{Algorithm 2}.
	
	\begin{algorithm}[h] \label{algo:allSubFrame}
		\KwIn{$q$}
		\KwOut{$q$}
		
		\SetKwFunction{FMain}{ChooseAllSubFrames}
		\SetKwFunction{FSub}{ChooseSubFrame}
		\SetKwProg{Fn}{Function}{:}{}
		\Fn{\FMain{$q$}}
		{
			\For{$1 \le s \le r$}
			{
				\For{$1 \le j \le p_s$}
				{
					$q \longleftarrow$ \FSub{$q, s, j, 0$} + 1
				}
			}
			\textbf{Return: } $q - 1$
		}
		\textbf{End Function}\\[.5em]
		\caption{Algorithm for choosing all sub-frames.}
	\end{algorithm}
	
	As before, \hyperref[algo:allSubFrame]{Algorithm 2} outputs the last column-block $q$ for which the sub-frame and the vector $\tau^q$ have been chosen. We are now in a position to get a suitable value of $q(\mathfrak{m}, K)$ for $K=1$. First, we choose the frame $\zeta^\bullet$ for each of the column-block $0, \ldots, q_0$, where $q_0\ge 0$ satisfies $nq_0 \ge p - n$ and set $\tau^0 = \ldots = \tau^{q_0} = 0$. Then, let $q(\mathfrak{m},1) = \texttt{ChooseAllSubFrames}(q_0 + 1)$. Let us denote the transformed matrix by $A_1 = A_1\big(j^{q(\mathfrak{m},1)+1}_u(x)\big)$. This concludes \hyperref[thm:jetLifting:step:1]{Step $1$}.
	
	Next, choose the sub-matrix $B$ of $A_1$ which are labeled by the prescribed columns chosen as in \hyperref[thm:jetLifting:step:1]{Step $1$}. Firstly, note that whenever we are choosing the sub-frame $\zeta^\bullet$ for column-block $q$, the $\Lambda$-block in that column-block of $B$ becomes $\mathrm{Id}_p$, since $\lambda^{\bullet}$ is dual to $\zeta^{\bullet}$. On the other hand, choosing the frame $(\hat\zeta^{s-1,\bullet}, \eta^{s,j})$ for some $(s,j)$ with $s > 1$ makes the $\Lambda$ block of the form 
	\begin{equation}\label{eqn:badLambdaBlock}
		\begin{pmatrix} \mathrm{Id}_{p_1 + \cdots + p_{s-2}} & 0 & 0 \\ 0 & 0 & *_{p_{s-1} \times 1} \\ 0 & \mathrm{Id}_{p_{s} + \cdots + p_r} & 0 \end{pmatrix}_{p \times (p - p_{s-1} + 1)},
	\end{equation}
	where $*$ represents the column vector $(\lambda^{s,\bullet}(\eta^{s,j}))$. Lastly, choosing $(\zeta^\bullet, \eta^{1,j})$ transforms the $\Lambda$ block into $\begin{pmatrix} \mathrm{Id}_p & 0_{p\times 1} \end{pmatrix}_{p\times (p+1)}$. It follows that $B$ is a square matrix of size $p(q(\mathfrak{m},1)+2) \, \times \, p(q(\mathfrak{m},1)+2)$.
	
	Let us now perform some invertible row and column operations on $B$, keeping its rank fixed. Starting from the bottom right corner of $B$ and then going towards the top-left corner, we consider each $\mathrm{Id}$ block in the off-diagonal $\Lambda$ block. Next using these $\mathrm{Id}$ blocks in order, we make everything zero first along the columns and then along the rows. Denote the new matrix by $B_1$ and observe that $\det B = \det B_1$. In any non-zero block of $B_1$ above the $\Lambda$ diagonal, the highest order derivative of $u$, say $\partial_t^{q+1} u$, is still contributed by the $\partial_t^q R_u$ factor of this block. In fact, any of these blocks look like 
	\[\begin{pmatrix} 0_{p \times (p^\prime -1)} & c \, d\lambda^{\bullet}(\partial_t^{q+1} u, \eta^{s,j}) + \text{terms in $j^q_u(x)$} \end{pmatrix}_{p\times p^\prime} \]
	for some integer $1 \le c \le q(\mathfrak{m}, 1)$ and some $p^\prime \in \{p + 1, p - p_s + 1, 1\le s \le r\}$. Note that this integer $c$ is the integer coefficient of the respective $\partial_t^q R_u$ factor as in \autoref{matrix:original}. Let $C$ be the square sub-matrix of $B_1$ obtained by removing the rows and columns corresponding to the $\mathrm{Id}$ blocks so that we have $\det B = \det B_1 = \pm \det C$, concluding \hyperref[thm:jetLifting:step:2]{Step $2$}.
	
	The columns of $C$ are precisely those columns corresponding to some $\eta^{s,j}$ chosen earlier via \hyperref[algo:allSubFrame]{Algorithm 2}, whereas the rows of $C$ correspond to each row from the first row-block of $B_1$, and all the rows starting with some $\lambda^{s,j^\prime}(\eta^{s,j})$ from the other row-blocks. Let us now show that $\det C \ne 0$. We replace $\partial_t^{q+1}u = X_q \tau^q + \cdots$ in $C$ by using \autoref{eqn:partial_t_q_u}. By i) the construction of $C$, ii) the choice of $\tau^q$'s in \hyperref[algo:subFrame]{Algorithm 1}, and iii) from \autoref{lemma:specialVectorFields}, it follows that in each row of $C$ there exists a unique column so that the element in this position satisfies the following:
	\begin{itemize}
		\item The element looks like, $c X_q + \text{terms in $X_1,\ldots,X_{q-1}$}$, for some $q$ and some integer $1 \le c \le q(\mathfrak{m}, 1)$.
		\item $X_q$ does not appear anywhere in $C$ to the left of this column.
		\item $X_q$ does not appear anywhere in this column below this row (but may appear above this row).
	\end{itemize}
	Note that, not every variable $X_q$ appears in $C$, since we have set many $\tau^q = 0$. In fact, there are precisely $\bar q$ many variables appearing, where $C$ has the size $\bar q \times \bar q$. Hence, for notational convenience, let us rename the appearing $X_q$'s to $\{Y_1,\ldots, Y_{\bar q}\}$ in the increasing order. We then have the following recursive formula by expanding $\det C$.
	\begin{equation}\label{eqn:detCRecursive}
		\left.\begin{aligned}
			\det C &= Y_{\bar q} f_{\bar q-1}(\sigma, Y_1,\ldots,Y_{\bar q-1}) + g_{\bar q-1}(\sigma, Y_1,\ldots,Y_{\bar q-1}) \\
			f_{\bar q-1}(\sigma, Y_1,\ldots,Y_{\bar q-1}) &= Y_{\bar q-1}f_{\bar q-2}(\sigma, Y_1,\ldots,Y_{\bar q-2}) + g_{\bar q-2}(\sigma, Y_1,\ldots,Y_{\bar q-2}) \\
			&\vdots \\
			f_2(\sigma, Y_1, Y_2) &= Y_2 f_1(\sigma, Y_1) + g_1(\sigma, Y_1) \\
			f_1(\sigma, Y_1) &= Y_1 \det \tilde{C} + g_0(\sigma)
		\end{aligned} \right\}
	\end{equation}
	Above, $f_i, g_i$ are polynomial functions, where $g_0$ depends only on the choice of the first jet $\sigma = j^1_u(x)$. The matrix $\tilde{C}$ has the following property: for each row of $\tilde{C}$, there exists a unique column, so that the element in that position is an integer (corresponding to the coefficient of some $X_q$) and everything below this integer in this column is $0$. Indeed, this column corresponds to the unique column of $C$ with the first occurrence of $X_q$ as discussed above. Then, some column permutation puts the matrix $\tilde{C}$ in an upper triangular form, with non-zero integers in the diagonal. In particular, $\det \tilde{C} \ne 0$. Hence, choosing $Y_1,Y_2,\ldots, Y_{\bar q}$ successively and sufficiently large, we can see from \autoref{eqn:detCRecursive} that $\det C \ne 0$. In other words, we have shown that the polynomial $\det B$ is non-vanishing when restricted to $\mathcal{R}^{q(\mathfrak{m},1)}_{\textrm{tang}}|_{\sigma}$, which concludes \hyperref[thm:jetLifting:step:3]{Step $3$}.
	
	To finish the proof for $K = 1$, let $\alpha > q(\mathfrak{m}, 1)$. First perform \texttt{ChooseAllSub\-Frames($q_0+1$)} as above. Next, for $q(\mathfrak{m}, 1) + 1 \le q \le \alpha$, choose the sub-frame $\zeta^\bullet$ for the column-block $q$ and set $\tau^q = 0$. We can then continue with the rest of the argument as above without any change. In particular, for any $\alpha \ge q(\mathfrak{m}, 1)$, we have the codimension of the complement of $\mathcal{W}_\alpha|_\sigma$ is at least $1$ in $\mathcal{R}^\alpha_{\textrm{tang}}|_\sigma$.
	
	Let us now induct over $K$. Suppose for some $K \ge 1$, we have obtained a suitable $q(\mathfrak{m}, K)$ and some polynomials $P_1,\ldots, P_K$ in the jets $j^{q(\mathfrak{m}, K)}_u(x)$ so that 
	\begin{equation*}
		\resizebox{\linewidth}{!}{$(\mathcal{W}_{q(\mathfrak{m},K)})^c \subset \{P_1 = \cdots = P_K = 0\},\quad \text{and} \quad \codim \{P_1 = \cdots = P_K = 0\} \ge K \text{ in } \mathcal{R}^{q(\mathfrak{m}, K)}_{\textrm{tang}}|_{\sigma}.$}
	\end{equation*}
	We set $q(\mathfrak{m}, K+1) = \texttt{ChooseAllSubFrames}\big(q(\mathfrak{m}, K) + 1\big)$. This will produce a new polynomial $P_{K+1}$ involving jets that were not involved in any of the $P_1,\ldots, P_K$. But then $\codim \{P_1 = \cdots = P_{K+1} = 0\} \ge K+1$ in $\mathcal{R}^{q(\mathfrak{m}, K+1)}_{\textrm{tang}}|_\sigma$. Also by construction, $(\mathcal{W}_{q(\mathfrak{m},K+1)})^c \subset \{P_1 = \cdots = P_{K+1} = 0\}$. Proceeding similarly as in the $K=1$ case, we can finish the inductive step.
	
	This concludes the proof of \autoref{thm:jetLifting}.
\end{proof}

\subsection{A Toy Case}\label{sec:toyCase}
Let us consider a toy case to illustrate the major steps in the proof of \autoref{thm:jetLifting}. We consider a bracket-generating distribution $\mathcal{D}$, of rank $n = 10$ and corank $p = 4$, on a manifold $M$ with $\dim M = 14$. Suppose $\mathcal{D}^3 = T M$ and $\rk\big(\mathcal{D}^2/\mathcal{D}\big) = 2 = \rk\big(\mathcal{D}^3/\mathcal{D}^2\big)$. Thus, $\mathcal{D}$ has the type \[\mathfrak{m} = (0 = m_0 < m_1 = 10 < m_2 = 12 < m_3 = 14),\]
and we have $p_1 = 2 = p_2$. For some $x \in M$, using \autoref{lemma:specialVectorFields}, we choose the necessary vectors $\tau^{s,j} \in \mathcal{D}_x, \eta^{s,j} \in \mathcal{D}^s, \zeta^{s,j} \in \mathcal{D}^{s+1}_x$, for $1 \le j \le p_s$ and $1 \le s \le 2$ and the $1$-forms $\lambda^{s,j}$. Clearly, for $q_0 = 0$ we have $n q_0 > p - n = -6$. Let us now determine $q(\mathfrak{m}, 1)$ and see how \hyperref[algo:allSubFrame]{Algorithm 2} produces the sub-frames for different column-blocks of the matrix $A$.
\begin{itemize}
	\ifarxiv \setlength\itemsep{-0.3em} \fi
	\item Pick $\zeta^\bullet$ for column-block $0$.
	\item Pick $(\zeta^\bullet, \eta^{1,1})$ for column-block $1$. 
	\item Pick $(\zeta^\bullet, \eta^{1,2})$ for column-block $2$. 
	\item Pick $(\hat\zeta^{1,\bullet}, \eta^{2,1})$ for column-block $3$. 
	\item Pick $\zeta^\bullet$ for column-blocks $4$ to $7$.
	\item Pick $(\zeta^\bullet, \eta^{1,1})$ for column-block $8$
	\item Pick $\zeta^\bullet$ for column-blocks $9$ to $12$
	\item Pick $(\zeta^\bullet, \eta^{1,2})$ for column-block $13$
	\item Pick $(\hat\zeta^{1,\bullet}, \eta^{2,2})$ for column-block $14$
	\item Pick $\zeta^\bullet$ for column-blocks $15$ to $29$
	\item Pick $(\zeta^\bullet, \eta^{1,1})$ for column-block $30$
	\item Pick $\zeta^\bullet$ for column-blocks $31$ to $45$
	\item Pick $(\zeta^\bullet, \eta^{1,2})$ for column-block $46$
\end{itemize}
Thus, we may take $q(\mathfrak{m}, 1) = 46$. The sub-matrix $B$ is a square matrix of size $4 \times (46+2) = 192$. We also choose the vectors $\tau^q$ as follows:
\begin{align*}
	\tau^1 = \tau^{1,1}, \quad \tau^2 = \tau^{1,2}, \quad \tau^3 = \eta^{2,1}, \quad \tau^4 = \tau^{1,1}, \\
	\quad \tau^9 = \tau^{1,2}, \quad \tau^{14} = \tau^{2,2}, \quad \tau^{15} = \tau^{1,1}, \quad \tau^{31} = \tau^{1,2}
\end{align*}
and set all other $\tau^q = 0$ for $0 \le q \le 46$.

Removing all the rows and columns corresponding to some $\mathrm{Id}$ block in the off-diagonal, we get the matrix $C$ of size $8 \times 8$ from $B$. Let us also replace $\partial_t^{q+1} u = X_q \tau^q + \cdots$ in $C$. Then $C$ looks as follows:

\medskip
\resizebox{.98\linewidth}{!}{$C = \begin{blockarray}{cccccccc}
	\eta^{1,1} & \eta^{1,2} & \eta^{2,1} & \eta^{1,1} & \eta^{1,2} & \eta^{2,1} & \eta^{1,1} & \eta^{1,2} \\
	\begin{block}{(cccccccc)}
		X_1 + * & ?X_2 + * & ?X_3 + * & ?X_4 + * & ?X_9 + * & ?X_{14} + * & ?X_{15} + * & ?X_{31} + * \\
		0X_1 + * & X_2 + * & ?X_3 + * & ?X_4 + * & ?X_9 + * & ?X_{14} + * & ?X_{15} + * & ?X_{31} + * \\
		0X_1 + * & 0X_2 + * & X_3 + * & ?X_4 + * & ?X_9 + * & ?X_{14} + * & ?X_{15} + * & ?X_{31} + * \\
		0X_1 + * & 0X_2 + * & 0X_3 + * & ?X_4 + * & ?X_9 + * & X_{14} + * & ?X_{15} + * & ?X_{31} + * \\
		0 & 0 & \lambda^{2,1}(\eta^{2,1}) & X_4 + * & ?X_9 + * & 0X_{14} + * & ?X_{15} + * & ?X_{31} + * \\
		0 & 0 & \lambda^{2,2}(\eta^{2,1}) & 0X_4 + * & X_9 + * & 0X_{14} + * & ?X_{15} + * & ?X_{31} + * \\
		0 & 0 & 0 & 0 & 0 & \lambda^{2,1}(\eta^{2,1}) & X_{15} + * & ?X_{31} + * \\
		0 & 0 & 0 & 0 & 0 & \lambda^{2,2}(\eta^{2,1}) & 0X_{15} + * & X_{31} + * \\
	\end{block}
\end{blockarray}_{8\times 8}$}
\medskip

\noindent Above, $*$ in some $?X_i + *$ denotes terms involving $X_1,\ldots,X_{i-1}$. Also, to keep the calculation simple, we have set the (non-zero) integer coefficients to $1$ for the highest order $X_q$ uniquely associated with each row. We can find a recursive formula for $\det C$ as in \autoref{eqn:detCRecursive}. In particular, the matrix $\tilde{C}$ is given as
\[
\tilde{C} = \begin{pmatrix}
	1 & ? & ? & ? & ? & ? & ? & ? \\
	0 & 1 & ? & ? & ? & ? & ? & ? \\
	0 & 0 & 1 & ? & ? & ? & ? & ? \\
	0 & 0 & 0 & ? & ? & 1 & ? & ? \\
	0 & 0 & 0 & 1 & ? & 0 & ? & ? \\
	0 & 0 & 0 & 0 & 1 & 0 & ? & ? \\
	0 & 0 & 0 & 0 & 0 & 0 & 1 & ? \\
	0 & 0 & 0 & 0 & 0 & 0 & 0 & 1 \\
\end{pmatrix}_{8 \times 8}
\]
which obviously satisfies $\det \tilde{C} \ne 0$.

\section{The $h$-Principle for Transverse Maps} \label{sec:hPrinTrans}
Given a distribution $\mathcal{D}$, a map $u : \Sigma \to M$ is said to be \emph{transverse} to $\mathcal{D}$ if the composition map $T\Sigma \overset{du}{\longrightarrow} u^*TM \overset{\lambda}{\longrightarrow} u^*TM/\mathcal{D}$ is surjective, where $\lambda : TM \to TM/\mathcal{D}$ is the quotient map. We have the following theorem.

\begin{theorem} \label{thm:hPrinTrans}
	Let $\mathcal{D}$ be an equiregular bracket-generating distribution on a manifold $M$. Then, maps transverse to $\mathcal{D}$ satisfy the $C^0$-dense parametric $h$-principle.
\end{theorem}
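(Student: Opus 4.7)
The plan is to derive \autoref{thm:hPrinTrans} from the microflexibility of $\calW$-regular $\calD$-horizontal curves (\autoref{thm:microflexibleLocalWHE}) together with their local $h$-principle (\autoref{corr:localWHE}), by invoking Gromov's sheaf-theoretic covering-homotopy machinery from \cite{gromovBook, eliashbergBook}.

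First I would reformulate transversality as an open first-order relation $\calR_\pitchfork \subset J^1(\Sigma, M)$, consisting of jets $j^1u(\sigma)$ with $du_\sigma(T_\sigma\Sigma) + \calD_{u(\sigma)} = T_{u(\sigma)}M$. Openness supplies the sheaves $\Sol \calR_\pitchfork$ of genuine and $\Gamma \calR_\pitchfork$ of formal solutions; the theorem amounts to the parametric, $C^0$-dense weak homotopy equivalence $\Sol \calR_\pitchfork \hookrightarrow \Gamma \calR_\pitchfork$ at the level of sections over $\Sigma$.

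Next, I would reduce to the one-dimensional case by exploiting the local structure of a transverse map $u$: the distribution $\calK = \ker(\lambda \circ du) \subset T\Sigma$ has corank $p$ and satisfies $du(\calK) \subset \calD$, and any rank-one subdistribution $\calL \subset \calK$ is automatically integrable, exhibiting $u$ locally as a family of $\calD$-horizontal curves along the $\calL$-leaves parametrized by a transversal. This realizes the sheaf of transverse maps on $\Sigma$ as being assembled, in a $\diff$-equivariant way, out of the sheaf of $\calW$-regular horizontal curves $\bbR \to M$ over a foliated base, so that the microflexibility and local $h$-principle for the curve sheaf have a chance of descending to the map sheaf.

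The final step then invokes Gromov's main theorem: a microflexible, $\diff$-invariant sheaf satisfying the local $h$-principle satisfies the parametric global $h$-principle on any manifold (\cite[Sec.~2.2]{gromovBook}, \cite[Ch.~13]{eliashbergBook}). Concretely, one triangulates $\Sigma$ and inductively extends a genuine solution from the $(k{-}1)$-skeleton across each top-dimensional cell by shrinking via microflexibility and filling via local solvability. The $C^0$-density follows because all modifications are localized in small charts, and parametricity is obtained by upgrading $\Sigma$ to $\Sigma \times K$ for a compact polyhedron $K$ of parameters throughout the argument. The main obstacle is verifying that the fibered picture of the previous paragraph is genuinely compatible with both microflexibility and the local $h$-principle, so that Gromov's hypotheses truly hold in the transverse setting; this is precisely where the analytic work culminating in \autoref{thm:jetLifting} is used.
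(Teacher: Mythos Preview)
Your reduction in the second paragraph is where the argument breaks. The kernel $\calK = \ker(\lambda \circ du)$ depends on the particular solution $u$, so it cannot be used to define a sheaf of maps on $\Sigma$ independently of the solution you are trying to produce; the construction is circular. Moreover, when $\dim\Sigma = \cork\calD$ the kernel is zero and there are no horizontal leaves at all, yet this case is covered by the theorem. Even when $\dim\Sigma > \cork\calD$, the restrictions $u|_{\text{$\calL$-leaf}}$ are merely $\calD$-horizontal, not $\calW$-regular, so neither \autoref{thm:microflexibleLocalWHE} nor \autoref{corr:localWHE} applies to them. In short, you have not exhibited a microflexible, $\diff$-invariant sheaf whose solutions restrict to $\calD$-transverse maps.

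The paper's route avoids all of this by \emph{adding} a dimension rather than foliating $\Sigma$. One passes to $\tilde\Sigma = \Sigma \times \bbR$ and considers the sheaf $\tilde\Phi^{\textrm{$\calW$-tran}}$ of maps $\tilde\Sigma \to M$ that are transverse to $\calD$ and whose restriction to each fiber $\{\sigma\}\times\bbR$ is a $\calW$-regular $\calD$-horizontal immersed curve. The operator $\tilde\frD(u) = (u^*\lambda^s(\partial_t))$ is infinitesimally invertible on this class by the same matrix analysis that produced $\calW$, so $\tilde\Phi^{\textrm{$\calW$-tran}}$ is microflexible and satisfies the local $h$-principle (analogues of \autoref{thm:microflexibleLocalWHE} and \autoref{corr:localWHE}). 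Since $\tilde\Phi^{\textrm{$\calW$-tran}}$ is invariant under fiber-preserving local diffeomorphisms of $\tilde\Sigma$, Gromov's theorem gives flexibility of the restricted sheaf $\tilde\Phi^{\textrm{$\calW$-tran}}|_{\Sigma\times 0}$. The restriction map $ev: u \mapsto u|_{\Sigma\times 0}$ lands in $\Sol\calR_{\textrm{tran}}$, and because $\calD$ has nonvanishing local sections one can always locally extend a formal transverse jet on $\Sigma$ to a formal section of the relation $\tilde\calR_{\textrm{tang-tran}}$ on $\tilde\Sigma$; this microextension property is what lets the flexibility on $\tilde\Sigma$ descend to the $h$-principle for $\calR_{\textrm{tran}}$ on $\Sigma$. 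The extra $\bbR$-factor is not optional: it is what supplies the ``room'' for the sheaf-theoretic compression argument and makes the auxiliary sheaf independent of any particular solution.
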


Once we have the microflexibility (\autoref{thm:microflexibleLocalWHE}) and the local $h$-principle (\autoref{corr:localWHE}) for $\mathcal{W}$-regular horizontal maps $\mathbb{R} \to M$, the proof essentially follows from the steps outlined as in \cite[pg. 84]{gromovBook}. The details were worked out in \cite[Theorem 14.2.1]{eliashbergBook} when the distribution is contact, and in \cite[Theorem 4]{pinoPresasFlexTangentTransverseEngel} for the case of Engel distributions. We include the sketch of the proof for the general case.

\begin{proof}[Proof of \autoref{thm:hPrinTrans}]
	Let $\mathcal{R}_{\textrm{tran}} = \big\{ j^1_u(x) \; \big| \; \text{$\lambda \circ du_x$ is surjective}\} \subset J^1(\Sigma, M)$ be the relation of $\mathcal{D}$-transverse maps $\Sigma \to M$. Since $\mathcal{R}_{\textrm{tran}}$ is open, we have $\Sol \mathcal{R}_{\textrm{tran}}$ is microflexible and furthermore, $j^1 : \Sol \mathcal{R}_{\textrm{tran}} \to \Gamma \mathcal{R}_{\textrm{tran}}$ is a local weak homotopy equivalence. To prove the $h$-principle, we need to find some suitable (local) microextensions (in the sense of \cite[Definition 5.10]{bhowmickDattaDeg2Fat}) to maps $\tilde{\Sigma} \to M$, where $\tilde{\Sigma} = \Sigma \times \mathbb{R}$.
	
	Let us consider the following class of maps:
	\[ \tilde \Phi^{\textrm{$\mathcal{W}$-tran}} = \Big\{ u: \tilde \Sigma \to M \; \Big| \; \substack{ \text{$u$ is transverse to $\mathcal{D}$, and for each $\sigma \in \Sigma$} \\ \text{$u|_{\sigma \times \mathbb{R}}$ is a $\mathcal{W}$-regular, $\mathcal{D}$-horizontal, immersed curve} } \Big\}.\]
	We prove the microflexibility and local $h$-principle for $\tilde\Phi^{\mathcal{W}-tran}$. Assuming $\mathcal{D} = \cap_{s=1}^p \ker \lambda^s$, let us consider the differential operator 
	\begin{align*}
		\tilde{\mathfrak{D}} : C^\infty(\tilde \Sigma, M) &\to \hom(T\mathbb{R}, \mathbb{R}^p) = C^\infty(\tilde \Sigma, M)\\
		u &\mapsto \big(u^*\lambda^s|_{T\mathbb{R}}\big) = \big(u^*\lambda^s(\partial_t)\big).
	\end{align*}
	Clearly, $u:\tilde{\Sigma} \to M$ is a solution of  $\tilde{\mathfrak{D}} = 0$ precisely when $u|_{\sigma \times \mathbb{R}}$ is $\mathcal{D}$-horizontal for each $\sigma \in \Sigma$. The linearization of $\tilde{\mathfrak{D}}$ at some $u$ is then given by a formula identical to \autoref{eqn:L_uFormula}. Since $t$ is a global coordinate on $\tilde\Sigma = \Sigma \times \mathbb{R}$, we have a splitting of the jet spaces. Denote by $j^{q+1,\perp}_u(x)$ the higher derivatives purely along the $t$ direction. We then see that the matrix $A = A(j^{q+1}_u)$ as in \autoref{matrix:original}, in fact depends only on $j^{q+1,\perp}_u$. Let $\tilde{\mathcal{R}}_{\textrm{tran}} \subset J^1(\tilde{\Sigma}, M)$ be the relation of $\mathcal{D}$-transverse maps $\tilde{\Sigma} \to M$ and define
	\[\mathcal{W}^{\textrm{tran}} = \Big\{j^{q+1}_u(x) \; \Big|\; j^1_u(x) \in \tilde{\mathcal{R}}_{\textrm{tran}}, \; \partial_t u(x) \ne 0, \; \text{$A\big(j^{q+1,\perp}_u(x)\big)$ has full rank}\Big\} \subset J^{q+1}(\tilde \Sigma, M).\]
	By similar arguments as in \autoref{sec:weakRegularity}, the operator $\tilde{\mathfrak{D}}$ is infinitesimally invertible on $\mathcal{W}^{\textrm{tran}}$-regular maps.
	
	Let $\tilde{\mathcal{R}}^\alpha_{\textrm{tang}} = \big\{j^{\alpha + 1}_u(x) \; \big| \; j^\alpha_{\tilde{\mathfrak{D}}(u)}(x) = 0 \big\}$ for $\alpha \ge 0$ and then for $\alpha \ge 2q$ define
	\[\mathcal{W}^{\textrm{tran}}_\alpha = \big(p^{\alpha+1}_{q+1}\big)^{-1}(\mathcal{W}^{\textrm{tran}}) \cap \tilde{\mathcal{R}}^\alpha_{\textrm{tang}} \subset J^{\alpha+1}(\tilde \Sigma, M).\]
	It is then immediate that the smooth solutions of $\mathcal{W}^{\textrm{tran}}_\alpha$ are all same and in fact $\tilde \Phi^{\textrm{$\mathcal{W}$-tran}} = \Sol \mathcal{W}^{\textrm{tran}}_\alpha$. Just as in \autoref{thm:microflexibleLocalWHE}, we have the microflexibility for $\tilde\Phi^{\textrm{$\mathcal{W}$-tran}}$, and $j^{\alpha + 1} : \tilde\Phi^{\textrm{$\mathcal{W}$-tran}} \to \Gamma \mathcal{W}^{\textrm{tran}}_\alpha$ is a local weak homotopy equivalence for $\alpha$ large enough. Since $\tilde\Phi^{\textrm{$\mathcal{W}$-tran}}$ is invariant under the pseudogroup of fiber-preserving local diffeomorphisms of $\tilde \Sigma$, we have the flexibility of the restricted sheaf $\tilde \Phi^{\textrm{$\mathcal{W}$-tran}}|_{\Sigma \times 0}$ \cite[pg. 78]{gromovBook}.
	
	Define the relation 
	\[\tilde{\mathcal{R}}_{\textrm{tang-tran}} = \Big\{j^1_u(x) \in \tilde{\mathcal{R}}_{\textrm{tran}} \; \Big| \; 0 \ne \partial_t u(x) \in \mathcal{D}_{u(x)} \Big\} \subset J^1(\tilde{\Sigma}, M).\]
	The jet projection map $J^{\alpha+1}(\tilde{\Sigma}, M) \to J^1(\tilde{\Sigma}, M)$ restricts to a map $\mathcal{W}^{\textrm{tran}}_\alpha \to \tilde{\mathcal{R}}_{\textrm{tang-tran}}$. Since transversality is a first order regularity condition, we deduce analogous to \autoref{corr:localWHE} that the map $\tilde \Phi^{\textrm{$\mathcal{W}$-tran}} \to \Gamma \tilde{\mathcal{R}}_{\textrm{tang-tran}}$ induced by the differential, is a local weak homotopy equivalence. Now, we have a map $ev : \Gamma \tilde{\mathcal{R}}_{\textrm{tang-tran}} \to \Gamma \mathcal{R}_{\textrm{tran}}$ induced by the restriction $u \mapsto u|_{\Sigma \times 0}$. By choosing some arbitrary non-vanishing local sections of $\mathcal{D}$, we can easily get local (parametric) formal extensions along this $ev$ map on contractible open sets of $\Sigma$. Thus, $\tilde{\mathcal{R}}_{\textrm{tang-tran}}$ is a microextension of $\mathcal{R}_{\textrm{tran}}$. We can now finish the proof of the $h$-principle as in \cite[Theorem 2.18]{bhowmickDattaDeg2Fat}.
\end{proof}

We would like to note that in a recent article \cite{pinoKnotEmbedding}, the authors have proved the above $h$-principle (among many other strong results) under similar constant growth assumption on the distribution, albeit using a completely different rather geometric technique. We believe that the technique utilized in the present article can be adapted to a broader class of problems, including the existence of horizontal immersions of submanifolds. Indeed, this method seems promising to address the following conjecture by Gromov.
\begin{conj*}\label{conj:overregular}\cite[pg. 259]{gromovCCMetric}
	Given a distribution $\mathcal{D}$ on $M$, $\Omega$-regular (i.e., $d\lambda^s$-regular) $\mathcal{D}$-horizontal immersions $\Sigma \rightarrow M$ satisfies the complete $h$-principle, provided $\dim M \ge (\dim \Sigma + 1) \codim \mathcal{D}$.
\end{conj*}

\subsection{Immersions Transverse to a Distribution}
Whenever $\dim \Sigma \le \dim M$, an immersion $u: \Sigma \rightarrow M$ is said to be \emph{transverse to $\mathcal{D}$} if $u$ is an immersion and the composition map $T\Sigma \rightarrow u^* \left( TM/ \mathcal{D} \right)$ is of full rank.
The following $h$-principle is well known.
\begin{theorem}[{\cite[Pg. 87]{gromovBook}, \cite[Pg. 71]{eliashbergBook}}] \label{thm:hPrinTransImmNonCritical}
	Let $\mathcal{D}$ be an arbitrary distribution on $M$. If $\dim \Sigma < \cork \mathcal{D}$, then immersions $\Sigma \to M$ transverse to $\mathcal{D}$ satisfy all forms of $h$-principles.
\end{theorem}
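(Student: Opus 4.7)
My plan is to reduce the theorem to Gromov's $h$-principle for directed immersions. Set $k = \dim \Sigma$, $n = \rk \calD$ and $p = \cork \calD$, so the hypothesis reads $k < p$. Introduce the open subbundle $\calA \subset \mathrm{Gr}_k(TM) \to M$ whose fiber at $y \in M$ is
\[
\calA_y \;=\; \big\{\, P \in \mathrm{Gr}_k(T_yM) \;\big|\; P \cap \calD_y = 0 \,\big\}.
\]
A map $u : \Sigma \to M$ is transverse to $\calD$ in the present sense precisely when $u$ is an immersion and its Gauss lift $\sigma \mapsto du(T_\sigma \Sigma)$ factors through $\calA$; that is, when $u$ is an $\calA$-directed immersion. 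So it suffices to establish all forms of the $h$-principle for $\calA$-directed immersions, for which I intend to appeal to Gromov's directed immersion theorem (see e.g.\ \cite[\S 19.2]{eliashbergBook}).

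The substantive step is to verify that the fibration $\calA \to M$ is \emph{ample} in the sense required by the directed immersion $h$-principle: for every $y \in M$ and every $(k-1)$-plane $\tau \subset T_yM$, the slice $\calA_\tau := \{P \in \calA_y \mid \tau \subset P\}$, viewed as a subset of $\bbP(T_yM/\tau)$, should either be empty or have the property that its cone in $T_yM/\tau$ has convex hull equal to all of $T_yM/\tau$. If $\tau \cap \calD_y \neq 0$ the slice is vacuously empty. Otherwise, a plane $P = \tau \oplus \bbR v$ meets $\calD_y$ precisely when the class $[v]$ lies in $(\tau + \calD_y)/\tau$, a linear subspace of codimension $(n+p-k+1) - n = p - k + 1 \ge 2$ in $T_yM/\tau$. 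The complement of $\calA_\tau$ is therefore the projectivization of a linear subspace of codimension at least two; such a complement is open, connected, and symmetric under $v \mapsto -v$, so its convex hull is the entire vector space.

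With ampleness in hand, Gromov's theorem yields the parametric, $C^0$-dense, and relative $h$-principle for $\calA$-directed immersions, which is precisely the content of the theorem. The only delicate point is the ampleness verification, and it relies on $k < p$ in an essential way: the strict inequality is exactly what forces the ``bad'' locus in each projective slice to have codimension at least two, cleanly separating this regime from the surjective-transversality setting of \autoref{thm:hPrinTrans}.
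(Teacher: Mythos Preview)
The paper does not supply its own proof of this statement; it is quoted as a known result from \cite{gromovBook} and \cite{eliashbergBook}, so there is nothing in the paper to compare against. Your argument via the $h$-principle for $\calA$-directed immersions is correct and is essentially the standard route to this theorem: the relation of immersions whose tangent planes avoid $\calD$ is open, and it is ample precisely because in each principal slice the ``bad'' locus is a linear subspace of codimension $p-k+1 \ge 2$, so its complement is connected with convex hull the whole space. The hypothesis $k < p$ also guarantees $\dim\Sigma < \dim M$, which is needed for the underlying immersion $h$-principle.

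One minor wording slip: in the sentence ``The complement of $\calA_\tau$ is therefore the projectivization of a linear subspace of codimension at least two; such a complement is open, connected, and symmetric under $v \mapsto -v$ \ldots'', the phrase ``such a complement'' should refer to the cone over $\calA_\tau$ itself (i.e., the complement of the linear subspace in $T_yM/\tau$), not to the complement of $\calA_\tau$, which is closed. With that clarified, the ampleness verification is clean and the appeal to the directed immersion theorem finishes the proof.
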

The critical dimension $\dim \Sigma = \cork \mathcal{D}$ is not covered by the above theorem. Although, for the special case of $\mathcal{D}$ being either a contact \cite[Theorem 14.2.2]{eliashbergBook} or an Engel distribution \cite{pinoPresasFlexTangentTransverseEngel}, the $h$-principle holds for all transverse immersions. The $h$-principle for smooth immersions transverse to \emph{real analytic} bracket-generating distributions was proved in \cite{pinoMicroflexibleTransverse}. We have the following.
\begin{theorem}\label{thm:hPrinTransImm}
	Let $\mathcal{D}$ be an equiregular bracket-generating distribution. Then, $C^0$-dense, parametric $h$-principle holds for immersions $\Sigma \to M$ transverse to $\mathcal{D}$, provided $\dim \Sigma < \dim M$.
\end{theorem}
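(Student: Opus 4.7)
The plan is to mimic the proof of \autoref{thm:hPrinTrans}, carrying the immersion condition through as an additional open first-order constraint. This is the approach used in \cite[Theorem 14.2.2]{eliashbergBook} for contact structures and in \cite{pinoPresasFlexTangentTransverseEngel} for Engel structures; the analytic content developed in \autoref{sec:weakRegularity} (infinitesimal invertibility of $\frD$ on $\calW$-regular curves, and hence microflexibility together with the local $h$-principle) is set up so that tacking on an extra open first-order relation on the base manifold does not disturb any of the arguments.

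I would first introduce the open relation
\[\calR_{\text{tran-imm}} = \Big\{ j^1_u(x) \in J^1(\Sigma, M) \;\Big|\; du_x \text{ injective and } u \text{ transverse to } \calD \text{ at } x \Big\},\]
which, being open, automatically satisfies $j^1: \Sol\calR_{\text{tran-imm}} \to \Gamma\calR_{\text{tran-imm}}$ is a local weak homotopy equivalence and the sheaf $\Sol\calR_{\text{tran-imm}}$ is microflexible. Passing to $\tilde\Sigma = \Sigma \times \bbR$, I would then introduce the sheaf
\[\tilde\Phi^{\calW\text{-tran-imm}} = \Big\{ u: \tilde\Sigma \to M \;\Big|\; \substack{u \text{ is a transverse immersion, and for each } \sigma \in \Sigma, \\ u|_{\sigma \times \bbR} \text{ is a $\calW$-regular, $\calD$-horizontal immersed curve}} \Big\}.\]
By the same $\calW$-regularity apparatus as in the proof of \autoref{thm:hPrinTrans}, this sheaf is microflexible and the jet map $j^{\alpha+1}$ into $\Gamma \calW^{\text{tran-imm}}_\alpha$ (defined analogously to $\calW^{\text{tran}}_\alpha$, with the extra immersion condition imposed on the first jet) is a local weak homotopy equivalence for $\alpha$ large enough. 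The sheaf is invariant under the pseudogroup of fiber-preserving local diffeomorphisms of $\tilde\Sigma$, so its restriction to $\Sigma \times 0$ is flexible by \cite[pg. 78]{gromovBook}. The $h$-principle would then follow, as in \autoref{thm:hPrinTrans} and \cite[Theorem 2.18]{bhowmickDattaDeg2Fat}, once the evaluation map $ev : \Gamma \tilde\calR_{\text{tang-tran-imm}} \to \Gamma\calR_{\text{tran-imm}}$ is shown to be a microextension.

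The main obstacle, and the only step where the hypothesis $\dim\Sigma < \dim M$ is used, is the microextension step. Given a formal transverse immersion over a contractible open $U \subset \Sigma$, I must parametrically extend it to a formal section of $\tilde\calR_{\text{tang-tran-imm}}$ over $U \times 0$; concretely, I must prescribe, along the image of the underlying map, a non-vanishing section $\xi$ of $\calD$ that is linearly independent from $du(T\Sigma)$, so that attaching $\xi$ as $\partial_t u$ produces a genuine immersion. By transversality, the intersection $du(T\Sigma) \cap \calD$ is a subbundle of rank $\rk\calD + \dim\Sigma - \dim M$, so the quotient $\calD / (du(T\Sigma) \cap \calD)$ has rank $\dim M - \dim\Sigma > 0$ precisely because $\dim\Sigma < \dim M$. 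This gives a non-empty affine fibration of admissible choices of $\xi$ over $U$, and since $U$ is contractible, sections exist parametrically; this verifies the microextension property. With this in place, the standard combination of microflexibility, flexibility of the restricted sheaf, and the local $h$-principle for $\tilde\Phi^{\calW\text{-tran-imm}}$ concludes the proof of the $C^0$-dense parametric $h$-principle for transverse immersions.
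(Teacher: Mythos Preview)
Your proposal is correct and follows essentially the same route as the paper's proof: intersect the transversality relation with the open immersion relation, set up the corresponding $\calW$-regular microextension on $\tilde\Sigma = \Sigma\times\bbR$, and identify the microextension step as the one place where $\dim\Sigma < \dim M$ is needed to produce a nonvanishing $\calD$-section independent of $du(T\Sigma)$. One small terminological slip: the admissible choices of $\xi$ do not form an \emph{affine} fibration but rather the complement of a proper subbundle inside $u^*\calD$; nevertheless, since the quotient bundle has positive rank, local (parametric) nonvanishing sections exist over contractible $U$, which is all that is required.
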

\begin{proof}
	The proof is identical to that of \autoref{thm:hPrinTrans}. Denote by $\mathcal{R}_{\textrm{imm}} \subset J^1(\Sigma, M)$ the relation of immersions $\Sigma \to M$, and similarly $\tilde{\mathcal{R}}_{\textrm{imm}} \subset J^1(\tilde{\Sigma}, M)$ where $\tilde{\Sigma} = \Sigma \times \mathbb{R}$. Then, $\mathcal{R}_{\textrm{imm-tran}} = \mathcal{R}_{\textrm{tran}} \cap \mathcal{R}_{\textrm{imm}}$ is the relation of immersions $\Sigma \to M$ transverse to $\mathcal{D}$. The microextension is provided by the relation $\tilde{\mathcal{R}}_{\textrm{imm-tang-tran}} = \tilde{\mathcal{R}}_{\textrm{tang-tran}} \cap \tilde{\mathcal{R}}_{\textrm{imm}}$, where $\tilde{\mathcal{R}}_{\textrm{tang-tran}}$ is as in \autoref{thm:hPrinTrans}. We have the map $ev : \Gamma \tilde{\mathcal{R}}_{\textrm{imm-tang-tran}} \to \Gamma \mathcal{R}_{\textrm{imm-tran}}$ induced by $u \mapsto u|_{\Sigma \times 0}$. Since $\dim\Sigma < \dim M$, we can always find \emph{non-vanishing} (local) extensions along the $ev$ map. The $h$-principle then follows.
\end{proof}

In particular, we have the $h$-principle for $\mathcal{D}$-transverse maps $\Sigma \to M$ for $\dim \Sigma = \cork \mathcal{D}$, provided $\mathcal{D}$ is bracket-generating. Also, taking $\mathcal{D} = TM$, \autoref{thm:hPrinTransImm} reduces to Hirsch's $h$-principle for immersions $\Sigma \to M$ \cite{hirschImemrsion}.

\begin{remark}
	When $\dim \Sigma \ge \cork \mathcal{D}$, one can also treat immersions $\Sigma \to (M,\mathcal{D})$ transverse to $\mathcal{D}$ as partially horizontal immersions \cite[pg. 256]{gromovCCMetric}. It turns out that all such maps are $\Omega_\bullet$-regular in the sense of Gromov. One can then get a stronger version of \autoref{thm:hPrinTransImm}, where the distribution can be taken to be arbitrary, and thus extending \autoref{thm:hPrinTransImmNonCritical}.
\end{remark}

\section*{Acknowledgments}
The author would like to thank M. Datta for many valuable comments and insightful discussions, and to S. Prasad for their help with the presentation of the article. The author would also like to thank the anonymous referee for pointing out a mistake in \autoref{lemma:specialVectorFields} which was originally stated without the equiregularity assumption, and for other suggestions to improve this article. This work was supported by IISER Kolkata funded Post-doctoral Fellowship (IISER-K PDF).


\end{document}